\newcommand{\N}{\mathbb{N}}
\newcommand{\R}{\mathbb{R}}
\newcommand{\Q}{\mathbb{Q}}
\newcommand{\rad}{\mathrm{rad}}
\newcommand{\cen}{\mathrm{cen}}
\newcommand{\radas}{\mathrm{rad}_{n\rightarrow \infty}}
\newcommand{\cenas}{\mathrm{cen}_{n\rightarrow \infty}}
\newcommand{\car}{{\mathbf{1}}}
\newcommand{\CAT}{{\mathrm{CAT}}}
\newtheorem{theorem}{Theorem}[section]
\newtheorem{lemma}[theorem]{Lemma}
\newtheorem{corollary}[theorem]{Corollary}
\newtheorem{proposition}[theorem]{Proposition}
\theoremstyle{definition}
\newtheorem{definition}[theorem]{Definition}
\newtheorem{example}[theorem]{Example}
\theoremstyle{remark}
\newtheorem{remark}[theorem]{Remark}
\numberwithin{equation}{section}
\begin{document}

\title[On weak convergence in metric spaces]{On weak convergence in metric spaces}
%\title{Notion of weak convergence in metric spaces}

%    Only \author and \address are required; other information is
%    optional.  Remove any unused author tags.

\author{Giuseppe Devillanova}
\address{giuseppe.devillanova@poliba.it, Politecnico di Bari, via Amendola, 126/B, 70126 Bari, Italy}

\author{Sergio Solimini}
\address{sergio.solimini@poliba.it, Politecnico di Bari, via Amendola, 126/B, 70126 Bari, Italy}

\author{Cyril Tintarev}
\address{tintarev@math.uu.se, Uppsala University, box 480, 761 06 Uppsala, Sweden}

\subjclass[2010]{Primary 46B10, 46B50,
46N20, Secondary 52A01, 52A07, 56E50, 49N99.}

\date{today}

\begin{abstract}
This note gives an exposition of various extensions of the notion of weak convergence to metric spaces. They are motivated by applications, such as existence of fixed points of non-expansive maps, and analysis of the defect of compactness relative to gauge groups in Banach spaces, where weak convergence is generally less useful than respectively, asymptotic centers in [{\bf14}] and polar convergence in a preliminary version of [{\bf35}]). The note compares notions of convergence of weak type found in literature, in particular the notion of $\Delta$-convergence introduced by Lim in [{\bf25}], polar convergence introduced by the authors, and the modes of convergence of weak type introduced by Jost [{\bf20}], Sosov [{\bf36}] and Monod [{\bf28}] in Hadamard spaces. Some applications of polar convergence, such as the existence of fixed points for nonexpansive maps and a suitable variant of the Brezis-Lieb Lemma are produced.
\end{abstract}

\maketitle

\hfill {\em Dedicated to the memory of Teck-Cheong Lim}
\smallskip{}

\tableofcontents

\section{\label{intro}Introduction}
In a preliminary version of \cite{ST} a notion of convergence, similar to weak convergence, but based on the distance rather than on the continuity of linear forms, called polar convergence, has been presented as a natural mode of convergence which has been used, instead of weak convergence, in the analysis of the defect of compactness relative to gauge groups, allowing extension of the results, already known in the case of Hilbert spaces.
Later, the authors became aware that this notion is very close to the notion of $\Delta$-{\em convergence}, already introduced by T.C.~Lim in \cite{Lim} (a similar definition also appears in \cite{K}). The two definitions remain meaningful in metric spaces and, while they are different even in general linear spaces, they coincide under some regularity conditions, as for instance, Staples' rotundity (see \cite{Staples} and Definition~\ref{def:R} below) which, in the case of linear spaces, is equivalent to uniform convexity. 
In linear spaces the two definitions give a notion which is distinct from the usual weak convergence. Although they agree in Hilbert spaces (\cite{G-R-Book}), they are generically different even in many uniformly convex Banach spaces including $L^p$ spaces.

Opial, in the pioneering work \cite{Opial}, searched for conditions under which iteration sequences of non-expansive maps are weakly convergent, introducing the classical Opial condition (see \cite[Condition (2)]{Opial} and Definition~\ref{def:StrongOpial} below). As it soon became clear with the Edelstein's proof (\cite{Edelstein}) of the Browder fixed point theorem, fixed points are associated with $\Delta$-convergence of iterations (as asymptotic centers of iteration sequences), so that the implicit meaning of Opial condition, in broad terms, is to assert that weak and $\Delta$-convergence coincide. In particular, it is exactly the case in uniformly convex and uniformly smooth Banach spaces that weak and $\Delta$-convergence coincide if and only if the Opial condition holds. 

There are also definitions of weak convergence in metric spaces, which are based on the continuity of suitable test maps (essentially distance-minimizing projections onto compact convex sets) that play the role of the continuous linear forms on normed spaces. 
These notions are studied in detail for Hadamard spaces which are complete geodesic spaces which satisfy a suitable property which we shall call ``parallelogram inequality'' (see Definition \ref{de:parallineq} below).
It is well known that every linear normed Hadamard space is a Hilbert space, so it is not surprising that by analogy with Hilbert spaces, Hadamard spaces satisfy Opial condition, and that the polar convergence and the weak convergence in Hadamard spaces coincide. 
On the other side, it should be noted that in Banach spaces where Opial condition does not hold, in particular, in $L^p$ spaces, metric weak convergence defined via test maps still coincides with $\Delta$-convergence and not with the usual weak convergence. 
This indicates that distance-minimizing projections are generally nonlinear maps, and thus they cannot give rise to the usual weak convergence.

In essence, $\Delta$-convergence (with its close variations such as polar convergence) remains the only known mode of convergence of weak type, other than the usual weak convergence in normed vector spaces, and the rich literature on the subject gives a number of definitions of convergence that are equivalent, in the context they are stated, to $\Delta$-convergence, or are its close variants.  

Although this paper is focused on convergence of sequences, extension of its notions to nets is mostly trivial.
With the exception of Section \ref{se:polar topology} we do not address the problem of checking if the various modes of convergence we introduce are coming from a topology or not.
We have tried, despite of the brevity of the paper, to make a self-contained exposition, including even some proofs of properties readily available in the literature.

The paper is organized as follows.
In Section \ref{se:convergences} we introduce polar and $\Delta$-convergence, which  
are directly depending on the distance. 
In Section \ref{Rotund} we introduce metric spaces which are (uniformly) Staples' rotund. These spaces are a metric counterpart of uniformly convex normed spaces. Similarly to uniformly convex Banach spaces, where weak and weak-star topology agree and some local compactness properties  (such as the Banach-Alaoglu compactness theorem) hold, in complete uniformly rotund metric spaces we prove that $\Delta$-convergence and polar convergence agree and enjoy some compactness properties (see Theorem \ref{thm:polarcompactness}).

In Section \ref{se:polar topology} we discuss the question posed by Dhompongsa,  Kirk and Panyanak \cite{DKP}, whether
$\Delta$-convergence (or polar convergence) can be associated with a topology, 
and we give explicit examples of metric spaces in which polar convergence is not derived from a topology.

In Section \ref{se:polarBS} we study polar convergence in Banach spaces pointing out, by examples, differences between polar convergence and weak convergence. In particular, when a Banach space is uniformly convex and uniformly smooth, $\Delta$-convergence can be characterized as weak convergence in the dual (see Theorem \ref{thm:conj}).

In Section \ref{se:spaces} we recall Opial condition and introduce ``parallelogram inequality'' on metric spaces which gives an equivalent definition of $\CAT(0)$ spaces (see \cite[Definition 2.9]{EspFern}) or spaces of Alexandrov nonpositive curvature (see \cite[Definition 2.3.1 and the subsequent remark]{JostBook}), as well as to deduce their Staples' rotundity. 
Minimal distance projections are employed in Section \ref{se:WCTM} where we extend to metric spaces the notion of weak convergence by replacing the linear forms with distance-minimizing projections onto geodesic segments.
We point out that on general normed spaces continuous linear forms are not distance-minimizing projections onto straight lines, so while this definition is appropriate for Hadamard spaces, which generalize Hilbert spaces, distance-minimizing projections do not extend the notion of weak convergence to general metric spaces.
Moreover in Theorem~\ref{te:Equivalence} we prove the equivalence of weak and polar convergence.
Before ending the section we also recall two topologies $\mathcal T_c$ and $\mathcal T_w$ defined on Hadamard spaces in \cite{Monod}
(while searching for a topology associated with $\Delta$-convergence (or weak convergence)), and we discuss the equivalence of the topology $\mathcal T_c$ with the weak topology. Moreover we show that in Hilbert spaces $T_w$ coincides with the topology induced by the norm.

In Section \ref{se:app} we apply polar ($\Delta$) convergence to the fixed points theory. In particular we introduce 
a condition (polar asymptotical regularity condition, see Definition \ref{de:PAR}) for a nonexpansive map at a point $x$  which guarantees the polar convergence of the iterations sequence at this point. 

Finally we discuss a version of the Brezis-Lieb Lemma \cite{Brezis-Lieb} where, see Theorem~\ref{thm:newbl}, the assumption of $a.e.$ convergence is replaced by the assumption of ``double weak'' convergence (namely polar convergence and usual weak convergence to the same point, which are both implied by $a.e.$ convergence of a bounded sequence in $L^p$ for $1<p<+\infty$). 
Moreover, given that there is a stronger version of Brezis-Lieb Lemma in Hilbert spaces, based on weak convergence only, we get an analogous result for Hadamard spaces.
\section{\label{se:convergences}Weak convergences of polar type}

\begin{definition}\textbf{($\Delta$-limit)}\label{de:DeltaConv}
Let $(E,d)$ be a metric space. A sequence $(x_{n})_{n\in\mathbb{N}}\subset E$ is said to $\Delta$-converge to a point $x\in E$ (see \cite{Lim}), and we shall write $x_{n}\stackrel{\Delta}{\rightarrow} x$, if
\begin{equation}
\label{eq:Dconvergence}
\limsup_n d(x_{k_n},x)\leq \limsup_n d(x_{k_n},y)
\end{equation}
for any subsequence $(x_{k_n})_{n\in\N}$ of $(x_{n})_{n\in\N}$ and for every $y\in E$.
\end{definition}

The following proposition, whose proof is straightforward, gives a characterization of a $\Delta$-limit.

\begin{proposition}
\label{pr:Dconvergence}
Let $(E,d)$ be a metric space. A sequence $(x_{n})_{n\in\mathbb{N}}\subset E$ $\Delta$-converges to a point $x\in E$ if and only if 
\begin{equation}
\label{eq:DconvChar}
\forall y\in E \; : \; d(x_n,x)\leq d(x_n,y) +o(1).
\end{equation}
\end{proposition}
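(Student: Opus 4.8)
The plan is to read the characterization \eqref{eq:DconvChar} as the single pointwise‑in‑$y$ assertion $\limsup_n\bigl(d(x_n,x)-d(x_n,y)\bigr)\le 0$, i.e.\ for each fixed $y$ there is a null sequence $\eta_n\to 0$ with $d(x_n,x)\le d(x_n,y)+\eta_n$, and to establish the two implications separately. Both rest only on two elementary facts: passing to a subsequence never increases a $\limsup$, and $\limsup$ is subadditive, $\limsup_n(a_n+b_n)\le\limsup_n a_n+\limsup_n b_n$ whenever the right‑hand side is not of the form $\infty-\infty$.

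For the implication \eqref{eq:DconvChar}$\Rightarrow$\eqref{eq:Dconvergence} I would fix an arbitrary subsequence $(x_{k_n})$ and an arbitrary $y\in E$. Starting from $d(x_{k_n},x)\le d(x_{k_n},y)+\eta_{k_n}$ with $\eta_{k_n}\to 0$, I take the $\limsup$ and use subadditivity together with $\limsup_n\eta_{k_n}=0$ to obtain $\limsup_n d(x_{k_n},x)\le\limsup_n d(x_{k_n},y)$, which is precisely \eqref{eq:Dconvergence}. This direction is completely routine and requires no extra hypotheses, since the difference term $\eta_{k_n}$ is controlled uniformly by the triangle inequality, $|d(x_n,x)-d(x_n,y)|\le d(x,y)$.

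For the converse \eqref{eq:Dconvergence}$\Rightarrow$\eqref{eq:DconvChar} I would argue by contraposition. If \eqref{eq:DconvChar} fails for some $y$, then $c:=\limsup_n\bigl(d(x_n,x)-d(x_n,y)\bigr)>0$, and the triangle bound above shows $c\le d(x,y)<\infty$. I then extract a subsequence $(x_{k_n})$ along which $d(x_{k_n},x)-d(x_{k_n},y)\to c$ and, after a further extraction, along which $d(x_{k_n},y)$ converges to a finite limit $L$; this forces $d(x_{k_n},x)\to L+c$, so that $\limsup_n d(x_{k_n},x)=L+c>L=\limsup_n d(x_{k_n},y)$, contradicting \eqref{eq:Dconvergence} for this subsequence and this $y$.

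The one genuine subtlety, and the step I expect to be the real obstacle, is the further extraction producing a \emph{finite} limit for $d(x_{k_n},y)$. This is automatic for bounded sequences, as is standard in this theory, but it can fail when distances blow up: if $d(x_{k_n},y)\to+\infty$ then $d(x_{k_n},x)\to+\infty$ as well, both sides of \eqref{eq:Dconvergence} become $+\infty$, and the $\Delta$‑condition holds vacuously even though \eqref{eq:DconvChar} need not. The clean equivalence is therefore really a statement about bounded sequences, so I would make boundedness explicit (or restrict to that case) before carrying out the contraposition; the first implication, by contrast, holds without any such restriction.
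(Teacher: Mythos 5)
Your argument is correct, and it is essentially the only natural one: the paper itself offers no proof (it declares the proposition ``straightforward''), so the expected content is exactly what you wrote --- subadditivity of $\limsup$ for the forward implication, and, for the converse, extraction of a subsequence realizing $\limsup_n\left(d(x_n,x)-d(x_n,y)\right)=c>0$ together with a further extraction making $d(x_{k_n},y)$ converge, which contradicts (\ref{eq:Dconvergence}) along that subsequence. Your closing caveat is not a defect of your proof but a genuine observation about the statement: for unbounded sequences the converse fails under the literal reading of Definition \ref{de:DeltaConv}. Indeed, in $E=\R$ the sequence $x_n=n$ satisfies $\limsup_n d(x_{k_n},x)=+\infty\leq+\infty=\limsup_n d(x_{k_n},y)$ for every subsequence and every $y$, hence $\Delta$-converges to $x=0$ in the sense of (\ref{eq:Dconvergence}), yet $d(x_n,0)=d(x_n,1)+1$, so (\ref{eq:DconvChar}) fails at $y=1$. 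The paper works throughout with bounded sequences (asymptotic centers and radii are only defined for those), so the implicit boundedness hypothesis is harmless in context, but you are right that the equivalence as literally stated needs it, and only for the direction (\ref{eq:Dconvergence})$\Rightarrow$(\ref{eq:DconvChar}); the forward direction holds unconditionally, as you note, since $\left|d(x_n,x)-d(x_n,y)\right|\leq d(x,y)$ keeps the error term finite.
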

Here and in what follows the Edmund Landau symbol $o(1)$ denotes a sequence of real numbers convergent to zero.

\begin{definition}\textbf{(Strong $\Delta$-limit)}\label{de:delta}
Let $(E,d)$ be a metric space. A sequence $(x_{n})_{n\in\mathbb{N}}\subset E$ is said to 
strongly-$\Delta$ converge to a point $x\in E$ (see \cite{Lim}), and we shall write $x_{n}\stackrel{s-\Delta}{\rightarrow} x$, if
\begin{equation}
\label{eq:Dconvergencen}
(\exists \lim_n d(x_n,x) \quad \mbox{ and}) \quad \forall y\in E\; : \; \lim_n d(x_n,x)\leq \liminf_n d(x_n,y)\;.
\end{equation}
\end{definition}
Note that if a sequence  $(x_n)_{n\in\N}$ has no bounded subsequence then it strong${-\Delta}$ converges to every point. On the contrary if $(x_n)_{n\in\N}$ is a bounded sequence, obviously, if $x_{n}\stackrel{s-\Delta}{\rightarrow} x$ then  $x_{n}\stackrel{\Delta}{\rightarrow} x$.
The notions of $\Delta$ and strong-$\Delta$ convergence of a given sequence $(x_n)_{n\in\N}$ can be further clarified by means of the notion of 
{\em asymptotic center} (denoted by $\cenas x_n$) and {\em asymptotic radius} (denoted by $\radas x_n$) (definitions of asymptotic radius and asymptotic centers can be found for instance in \cite{Edelstein} where Edelstein gives a proof of Browder fixed point theorem \cite{Browder} based 
on these notions. In this connection see page 251 of the paper \cite{Reich-remark} and page 18 in \cite{G-R-Book}). 
We emphasize that, while the asymptotic radius always exists and is uniquely determined, asymptotic centers may not exist or may be not uniquely determined. Therefore, the symbol $\cenas x_n$ must be understood in the same sense as the limit symbol in a topological space which is not assumed to be Hausdorff.
Here we use, for the sake of convenience, an elementary reformulation of the original definition of asymptotic centers
and asymptotic radius of a sequence $(x_n)_{n\in\N}$ as minimum points and, respectively,
infimum value of the following functional on $E$
\begin{equation}
\label{eq:Ias}
I_{as}(y)=\limsup_{n}d(x_{n},y).
\end{equation}

\begin{remark}
\label{re:DconvergenceCenter}
Let $(E,d)$ be a metric space. A sequence $(x_{n})_{n\in\mathbb{N}}\subset E$ $\Delta$-converges to a point $x\in E$ if and only if $x$ is an asymptotic center of every subsequence. On the other hand 
the strong-$\Delta$ convergence of $(x_n)_{n\in\N}$ to $x$ means that every subsequence has the same asymptotic radius (equal to $I_{as}(x)$).
This last property allows to prove that any asymptotic center of the whole sequence $(x_n)_{n\in\N}$ is an asymptotic center of every subsequence and therefore it is a $\Delta$-limit of $(x_n)_{n\in\N}$. (Indeed if 
$(x_{k_n})_{n\in\N}$ is a subsequence of $(x_n)_{n\in\N}$, since, by strong-$\Delta$ convergence, 
$\radas x_{k_n}=\radas x_n$, the inequality $\limsup_n d(x_{k_n},x)\leq \limsup_n d(x_n,x)=\radas x_n$ forces $x$ to be an asymptotic center of $(x_{k_n})_{n\in\N}$). Obviously the converse implication is not true.
\end{remark}

\begin{remark}
\label{re:ordering}
In other terms, let $\Xi$ be the set of bounded sequences of elements in $E$, 
for every $\xi$, $\zeta\in \Xi$, we shall write $\xi\leq \zeta$ if the sequence $\zeta$ is extracted from 
$\xi$ after a finite number of terms (note that, in spite of the notation, 
$\leq$ is not an ordering, since it is not antisymmetric).
The function $f$ which maps every sequence 
$\xi=(x_n)_{n\in\N}\in \Xi$ into $f(\xi)=-\radas x_n$ is a ``increasing'' function 
(i.e. if $\xi\leq \zeta$ then $f(\xi)\leq f(\zeta)$).
We can pass to a coarser relation $\preceq$, which is a true ordering and makes $f$ strictly increasing, 
by setting, for any $\xi,\zeta\in\Xi$, 
$\xi\preceq\zeta$ if $\xi=\zeta$ or if $\xi\leq \zeta$ and $f(\xi)<f(\zeta)$.
Under this notation, we can reformulate the second part of the previous remark by stating that 
$x_{n}\stackrel{s-\Delta}{\rightarrow} x$ if and only if $(x_n)_{n\in\N}$ is maximal for $\preceq$ and $x$ is an asymptotic center of $(x_n)_{n\in\N}$.

Note that the ordered set $(\Xi, \preceq)$ is countably inductive in the sense specified in \cite[Appendix A]{MadSol}.
Indeed, if $(\xi_n)_{n\in\N}\subset \Xi$ is increasing with respect to $\preceq$, 
after trowing away a finite number of terms from each sequence $\xi_n$
(in order to make each $\xi_{n+1}$ a subsequence of $\xi_n$) and passing to a diagonal selection, one obtains an upper bound $\xi$ of the whole sequence.
\end{remark}

\begin{remark}
\label{re:PolarVsDelta}
Note that a (strong) $\Delta$-limit is not necessarily unique, even in the case of a bounded sequence. For instance, if $(A_n)_{n\in\N}$ is a decreasing sequence of measurable sets of $\R$ such that, for any $n\in\N$, $A_n\setminus A_{n+1}$ is if positive measure, by setting, for any $n\in\N$, $x_n=\car_{A_{n+1}}-\car_{A_n\setminus A_{n+1}}$ we get a sequence of bounded functions which (since for any $n\neq m\in\N$ 
$\|x_n-x_m\|_\infty=2$) does not admit any subsequence with asymptotic radius strictly smaller than $1$.
Then, given $\bar{n}\in\N$, any function $x$ such that 
$\|x\|_{\infty}\leq 1$ and such that 
$x_{|A_{\bar{n}}}=0$ (for instance $x=\car_{\R\setminus A_{\bar{n}}}$) satisfies $\|x_n-x\|_\infty=1$
for $n>\bar{n}$. Therefore $x$ is an asymptotic center of the sequence $(x_n)_{n\in\N}$ and $\radas x_n=1$. Since, as already proved, $(x_n)_{n\in\N}$ does not admit any subsequence with asymptotic radius strictly smaller than $1$, it follows that $\radas x_{k_n}=1$ for any subsequence $(x_{k_n})_{n\in\N}$. Therefore, by Remark 
\ref{re:DconvergenceCenter}, $x$ is a strong-$\Delta$ limit of the sequence $(x_n)_{n\in\N}$.
\end{remark}

\begin{definition}\textbf{(Polar limit)}\label{de:pol} 
Let $(x_{n})_{n\in\mathbb{N}}$
be a sequence in a metric space $(E,d)$. One says that $x\in  E$ 
is a \emph{polar limit} of $(x_{n})_{n\in\mathbb{N}}$ and we shall write $x_n\rightharpoondown x$, if for every
$y\neq x$ there exists $M(y)\in\mathbb{N}$ such that 
\begin{equation}
d(x_{n},x)<d(x_{n},y)\text{ for all }n\ge M(y).\label{eq:mwl}
\end{equation} 
\end{definition}

\begin{remark}
\label{re:PimplyD}
It is immediate from comparison between (\ref{eq:DconvChar}) and (\ref{eq:mwl}) that polar convergence implies 
$\Delta$-convergence. Moreover (\ref{eq:mwl}) guarantees the uniqueness of polar limit. On the other hand, one can deduce from Remark~\ref{re:PolarVsDelta} that $\Delta$-convergence and polar convergence generally do not coincide.
\end{remark}
\begin{remark}
\label{re:polarVsDelta}
The following properties for $\Delta$, strong-$\Delta$ and polar limits of sequences in a metric space $(E,d)$ are immediate:
\begin{itemize}
\item [{(i)}] If $x_n\stackrel{s-\Delta}{\to}x$ or $x_n\rightharpoondown x$, then $x_n\stackrel{\Delta}{\to}x$.
\item [{(ii)}] If $x_n\to x$ (in metric), then $x_n\stackrel{s-\Delta}{\to}x$ and $x_n\rightharpoondown x$.
\item [{(iii)}] If $(x_n)_{n\in\N}$ is $\Delta$ (resp. strong-$\Delta$, resp. polarly)-convergent to a point $x$, then any subsequence $(x_{k_n})_{n\in\N}$ of $(x_n)_{n\in\N}$ is $\Delta$ (resp. strong-$\Delta$, resp. polarly)-convergent to $x$.
\item [{(iv)}] $(x_n)_{n\in\N}$ is $\Delta$ (resp. polarly)-convergent to a point $x$ if and only if any subsequence 
$(x_{k_n})_{n\in\N}$ of $(x_n)_{n\in\N}$ admits a subsequence which is $\Delta$ (resp. polarly)-convergent to 
$x$.
\item [{(v)}] If $(x_n)_{n\in\N}$ is $\Delta$-convergent to a point $x$, then it admits a subsequence which is strong-$\Delta$ convergent to 
$x$.
\item [{(vi)}] If $E$ is a Banach space, then the sequence $((-1)^n x)_{n\in\N}$, with any $x\neq 0$, has no $\Delta$-limit. \end{itemize}
\end{remark}
It follows from (iv) and (v) combined that a sequence $(x_{n})_{n\in\mathbb{N}}$ $\Delta$-converges to a point $x$ if every subsequence admits a subsequence strong-$\Delta$-convergent to $x$.

\begin{proposition}
\label{prop:Cauchy}
Let $(E,d)$ be a metric space, $x\in E$ and let $(x_{n})_{n\in\mathbb{N}}$ be a precompact sequence (or, in particular, a Cauchy sequence) in $E$. 
If $x_{n}\stackrel{\Delta}{\rightarrow} x$,
then
$x_{n}\to x$.
\end{proposition}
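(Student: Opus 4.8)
The plan is to argue by contradiction, exploiting precompactness to manufacture a test point $y$ for the defining inequality of $\Delta$-convergence. Suppose, to the contrary, that $x_n\not\to x$. Then there are an $\varepsilon>0$ and a subsequence $(x_{k_n})_{n\in\N}$ with $d(x_{k_n},x)\ge\varepsilon$ for every $n$. By precompactness this subsequence has a convergent sub-subsequence; after relabelling we may assume $x_{k_n}\to y$ for some $y\in E$. Passing to the limit in $d(x_{k_n},x)\ge\varepsilon$ gives $d(y,x)\ge\varepsilon$, so in particular $y\neq x$.

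Next I would invoke the hypothesis on this sub-subsequence. By property (iii) the subsequence $(x_{k_n})_{n\in\N}$ still $\Delta$-converges to $x$, so the characterization in Proposition \ref{pr:Dconvergence} (or directly Definition \ref{de:DeltaConv}) applies with the test point $y$:
\begin{equation*}
\limsup_n d(x_{k_n},x)\le\limsup_n d(x_{k_n},y).
\end{equation*}
Since $x_{k_n}\to y$, the right-hand side equals $0$, whence $\limsup_n d(x_{k_n},x)=0$, i.e.\ $x_{k_n}\to x$ as well. Uniqueness of limits in the metric $d$ then forces $y=x$, contradicting $d(y,x)\ge\varepsilon>0$. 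This contradiction shows $x_n\to x$.

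The crux of the argument is the single idea of feeding the cluster point $y$ back into the $\Delta$-inequality; everything else is routine. The only point requiring genuine care is that precompactness is precisely what guarantees the existence of the convergent sub-subsequence, and hence of an admissible test point $y\neq x$; without it the argument would stall. For the special case of a Cauchy sequence one can bypass the extraction of a convergent subsequence (which could fail in an incomplete space) and argue directly instead: Proposition \ref{pr:Dconvergence} gives $d(x_n,x)\le d(x_n,x_m)+o(1)$ for each fixed $m$, so that $\limsup_n d(x_n,x)\le\limsup_n d(x_n,x_m)$, and letting $m\to\infty$ the Cauchy property drives the right-hand side to $0$, yielding $\limsup_n d(x_n,x)=0$.
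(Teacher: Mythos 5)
Your main argument has a genuine gap at the step ``by precompactness this subsequence has a convergent sub-subsequence.'' That step is valid only if ``precompact'' is read as \emph{relatively compact}, but the proposition (stated for an arbitrary, possibly incomplete, metric space) must be using ``precompact'' in the sense of \emph{totally bounded}: this is what the paper's own proof uses (it extracts a finite $\varepsilon$-net, not a convergent subsequence), and it is the only reading under which a Cauchy sequence is, as claimed, a particular case. In an incomplete space a totally bounded sequence can have no convergent subsequence at all (e.g.\ $x_n=1/n$ in $E=(0,1)$), so your cluster point $y$ need not exist and the contradiction argument stalls exactly where you yourself flag the danger for the Cauchy case --- but the same danger afflicts the general precompact case, which you do not patch.

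The good news is that your closing paragraph already contains the repair. The direct Cauchy argument ($d(x_n,x)\le d(x_n,x_m)+o(1)$, then $\limsup_n d(x_n,x)\le \limsup_n d(x_n,x_m)\to 0$ as $m\to\infty$) is correct, and since every totally bounded sequence admits a \emph{Cauchy} sub-subsequence (no completeness needed), you can run your subsequence extraction with ``Cauchy'' in place of ``convergent'' and then invoke that argument together with property (iii). Alternatively, the paper's proof avoids both contradiction and subsequence extraction entirely: it applies the characterization (\ref{eq:DconvChar}) to each of the finitely many points of an $\varepsilon$-net and takes the minimum, getting $d(x_n,x)<\varepsilon+o(1)$ directly. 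Either route works; as written, yours does not cover the stated hypothesis.
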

\begin{proof}
Since the sequence $(x_{n})_{n\in\mathbb{N}}$ is precompact, for any $\varepsilon>0$, there exists a finite $\varepsilon$-
net $\mathcal{N}_{\varepsilon}$ of $(x_{n})_{n\in\mathbb{N}}$. Then,
by (\ref{eq:DconvChar}), for large $n$, we have 
$d(x_{n},x)\leq \min_{y\in\mathcal{N_{\varepsilon}}}d(x_{n},y)+o(1)<\varepsilon$,
i.e. $d(x_{n},x)\to 0$.
\end{proof}

\begin{definition}
\label{de:Deltacompleteness} (\cite{Lim}) A metric space $(E,d)$ is called $\Delta$-complete (or is said to satisfy the $\Delta$-completeness property) if every bounded sequence admits an asymptotic center.
\end{definition}
An easy maximality argument, see for instance \cite[Theorem A.1]{MadSol}, yields the following result.

\begin{theorem}
\label{thm:compactness} (\cite[Theorem 3]{Lim}) 
Let $(E,d)$ be a $\Delta$-complete metric space. Then every bounded sequence in $E$ has a strong-$\Delta$-convergent subsequence.
\end{theorem}

\begin{proof}
Since the ordered set $(\Xi,\preceq)$ introduced in Remark \ref{re:ordering} is countably inductive and since the function 
$f$ which maps every sequence 
$\xi=(x_n)_{n\in\N}\in \Xi$ into $f(\xi)=-\radas x_n$ is a real strictly increasing function
(see Remark \ref{re:ordering}), by using \cite[Theorem A.1]{MadSol} one obtains that every sequence 
$(x_n)_{n\in\N}\in \Xi$ has a maximal subsequence for $\preceq$.
Since $E$ is $\Delta$-complete, this subsequence has an asymptotic center to which, since it is maximal, it is strongly-$\Delta$ convergent (see Remark \ref{re:ordering}).
\end{proof}

This argument has been employed by Lim in \cite{Lim} and, in a very close setting, in the proof of 
\cite[Lemma 15.2]{GK} (although the lemma is set in the Banach space, the proof extends to the metric space verbatim, as it has been already observed in \cite{KirkPanayak}). Note that the existence of a strictly increasing real valued function $f$ and the separability of $\R$ also allow to prove that the countable inductivity leads to inductivity and so one can deduce the existence of a maximal element by Zorn Lemma as in \cite[Proposition 1]{Lim}. However, the direct argument in \cite[Theorem A.1]{MadSol} looks even simpler than the proof of the inductivity.

There is a number of publications where $\Delta$ convergence or strong-$\Delta$ convergence (not always under that name) is applied to problems related to fixed points in Hadamard spaces - to mention just few, \cite{Bacak, DKS, KirkPanayak}, but we first consider here a larger class of metric spaces, namely {\em uniformly rotund spaces} introduced by John Staples \cite{Staples}. 

\section{\label{Rotund}Rotund metric spaces}

\begin{definition}
\label{def:R}
A metric space $(E,d)$ is a (uniform) SR (``Staples
rotund'') metric space (or satisfies (uniformly) property SR) if
there exists a function $\delta:\;(\mathbb{R}_{+})^{2}\to\mathbb{R}_{+}$
such that for any $r,\,\overline{d}>0$, set $\delta=\delta(r,\overline{d})$, for any $x,y\in E$ with 
$d(x,y)\ge\overline{d}$:
\begin{equation}
\label{eq:SR}
\tag{SR}
\mathrm{rad}(B_{r+\delta}(x)\cap B_{r+\delta}(y))\leq r-\delta.
\end{equation}
\end{definition}
\noindent For the reader's convenience we recall that the Chebyshev
radius of a set $X$ in a metric space $(E,d)$ is the infimum of
the radii of the balls containing $X$. In other words $\mathrm{rad}(X)=\inf_{x\in E}\sup_{y\in X} d(x,y)$.
Moreover, the Chebyshev radius and the Chebyshev centers of a set $X\subset E$ can also be defined, analogously to the asymptotic radius and asymptotic centers of a sequence, by replacing the functional $I_{as}$ in (\ref{eq:Ias}) by 
\begin{equation}
\label{eq:I_X}
I_X: x\in E\mapsto \sup_{y\in X} d(x,y).
\end{equation}

\begin{remark}
If $\delta$ is continuous, one can replace, in the above definition,
the occurrences of $r+\delta$ by $r$. Moreover, from Definition \ref{def:R} it is immediate that uniformly convex normed vector spaces (see \cite[Definition 1.e.1]{L-T}) are uniformly SR metric spaces. Furthermore, any uniformly SR normed vector space is uniformly convex. To show this, let $\|x\|\leq 1$, $\|y\|\leq 1$ such that $\|x-y\|\ge\varepsilon$. It follows easily that both $0$ and $x+y$ belong to $\bar B_1(x)\cap \bar B_1(y)$.
By rotundity, set
$\delta=\delta(1,\varepsilon)$, we have $\mathrm{rad}(\bar B_1(x)\cap \bar B_1(y))\leq 1-\delta$. Therefore,  
$\|x+y-0\|\le 2\mathrm{rad}(\bar B_1(x)\cap \bar B_1(y))\le 2-2\delta$ and so $\|\frac{x+y}{2} \|\le 1-\delta$ follows, thus proving uniform convexity. 
\end{remark}

\begin{remark}
Staples' rotundity is close to the uniform ball convexity in Foertsch \cite{Foertsch} and to the uniform convexity for hyperbolic metric spaces given by Reich and Shafrir \cite{RS}. The latter properties assume existence of a midpoint map which is not assumed by Staples, although, on the other hand, Staples' definition makes an additional assumption that roughly speaking amounts to the continuity of the modulus of convexity with respect to radius of the considered balls. In Banach spaces the above definitions coincide with the notion of uniform convexity. Particular examples of SR metric spaces are given by Hadamard spaces and, more generally, by $\mathrm{CAT}(0)$ spaces which will be discussed in Section \ref{se:spaces} below. 
\end{remark}

For complete Staples rotund metric spaces, 
\cite[Theorem 2.5]{Staples} and \cite[Theorem 3.3]{Staples} state, respectively, uniqueness and  existence of the asymptotic center of any bounded sequence (giving actually the $\Delta$-completeness of such spaces, see Definition \ref{de:Deltacompleteness}).
We are going to prove the just mentioned properties as one of the following claims which hold true in complete SR metric spaces.

For complete SR metric spaces we have the following results.
\begin{itemize}
\item[{\bf a})] $\Delta$-convergence and polar convergence coincide.
\item[{\bf b})] Every bounded sequence has a unique asymptotic center.
\item[{\bf c})] The space is $\Delta$-complete.
Therefore (the sequential compactness property in) Theorem \ref{thm:compactness} also holds for polar convergence as stated in the following theorem.
\end{itemize}

\begin{theorem}
\label{thm:polarcompactness}
Let $(E,d)$ be a complete SR metric space. Then every bounded sequence in $E$ has a polarly convergent subsequence.
\end{theorem}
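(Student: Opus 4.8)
The plan is to read off the theorem as an immediate consequence of the claims a), b), c) stated above together with Theorem~\ref{thm:compactness}: in a complete SR space polar convergence is nothing but a repackaging of $\Delta$-convergence, so the compactness already available for strong-$\Delta$ convergence transfers verbatim. Concretely, I would fix a bounded sequence $(x_n)_{n\in\N}\subset E$ and argue in three short steps.

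First I would invoke claim c), that a complete SR metric space is $\Delta$-complete, i.e. every bounded sequence admits an asymptotic center. This is exactly the hypothesis of Theorem~\ref{thm:compactness}, which then produces a subsequence $(x_{k_n})_{n\in\N}$ that strong-$\Delta$ converges to some $x\in E$. Next, by property~(i) above (equivalently, by Remark~\ref{re:DconvergenceCenter}), strong-$\Delta$ convergence implies $\Delta$-convergence to the same point, so $x_{k_n}\stackrel{\Delta}{\rightarrow}x$. Finally I would apply claim a), that in a complete SR space $\Delta$-convergence and polar convergence coincide; hence $(x_{k_n})_{n\in\N}$ is in fact polarly convergent to $x$, which is the required subsequence.

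The statement of the theorem therefore carries no difficulty of its own: its entire content is borne by the claims it rests on, and the main obstacle lies in establishing them rather than in assembling them. The delicate points are claim c) and claim a), both of which genuinely use the rotundity inequality (\ref{eq:SR}). For claim c) one must combine completeness with the quantitative control of $\mathrm{rad}(B_{r+\delta}(x)\cap B_{r+\delta}(y))$ to show that a minimizing sequence for the functional $I$ is Cauchy, so that its limit is an asymptotic center. For claim a) one must upgrade the non-strict inequality $d(x_n,x)\le d(x_n,y)+o(1)$ characterizing $\Delta$-convergence (see (\ref{eq:DconvChar})) to the eventual strict inequality $d(x_n,x)<d(x_n,y)$ of (\ref{eq:mwl}): if this failed for some $y\neq x$, then along a subsequence $x$ and $y$ would be competing near-centers, and rotundity would push their midpoint strictly inside, contradicting that $x$ is the asymptotic center. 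Once these claims are secured, the compactness statement above is a one-line corollary.
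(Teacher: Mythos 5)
Your proposal is correct and follows exactly the paper's route: the theorem is obtained by combining claim c) (\,$\Delta$-completeness, via Corollary~\ref{cor:unicity}\,) with Theorem~\ref{thm:compactness} to extract a strong-$\Delta$ (hence $\Delta$) convergent subsequence, and then claim a) (\,Lemma~\ref{lem:strong-dom}\,) to upgrade $\Delta$-convergence to polar convergence. Your sketches of how a) and c) are established from the rotundity inequality (\ref{eq:SR}) also match the paper's arguments (the contradiction via $\mathrm{rad}(B_{r+\delta}(x)\cap B_{r+\delta}(z))\leq r-\delta$ in Lemma~\ref{lem:strong-dom}, and the Cauchy property of minimizing sequences for $I$), so nothing further is needed.
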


Actually claim {\bf c}) is just a restatement of claim {\bf b}) and therefore it does not need to be proved.
\begin{lemma}
\label{lem:strong-dom} Let $(E,d)$ be a SR metric space. Let $(x_{n})_{n\in\mathbb{N}}\subset E$
be a bounded sequence and let $x\in E$ be such that (\ref{eq:DconvChar}) holds true.
Then, for each element $z\in E$, $z\ne x,$ there exist positive
constants $n_{0}$ and $c$ depending on $z$ such that 
\begin{equation}
\label{eq:NewStrongDom}
d(x_{n},x)\le d(x_{n},z)-c\mbox{ \quad for all\quad\ \ensuremath{n\ge n_{0}}},
\end{equation}
\noindent and so $(x_n)_{n\in\N}$ polarly converges to $x$.
\end{lemma}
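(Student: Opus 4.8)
The plan is to argue by contradiction, extracting from a failure of (\ref{eq:NewStrongDom}) a subsequence whose asymptotic radius can be strictly lowered by means of the SR property, thereby contradicting the fact that $x$ is an asymptotic center of that subsequence. First I would record the consequence of the hypothesis that I will actually use: since (\ref{eq:DconvChar}) involves, for each fixed $y$, an $o(1)$ term that remains $o(1)$ along any subsequence, the inequality $d(x_{k_n},x)\le d(x_{k_n},y)+o(1)$ holds for every subsequence $(x_{k_n})_{n\in\N}$ and every $y\in E$. Taking $\limsup_n$ shows that $x$ is an asymptotic center of every subsequence, i.e. $\limsup_n d(x_{k_n},x)=\radas x_{k_n}=\inf_{w\in E}\limsup_n d(x_{k_n},w)$ (this is exactly Remark~\ref{re:DconvergenceCenter}). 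Hence it suffices, whenever the gap fails, to produce a single point $w$ whose asymptotic distance to a suitable subsequence is strictly smaller than that of $x$.

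Next I would fix $z\neq x$ and set $\overline d=d(x,z)>0$. Assuming (\ref{eq:NewStrongDom}) fails for every $c>0$ and every $n_{0}$, I would take $c=1/m$ together with an increasing choice of indices to extract a subsequence $(x_{n_m})_{m\in\N}$ satisfying $d(x_{n_m},z)-d(x_{n_m},x)<1/m$. Combining this with the reverse estimate $d(x_{n_m},z)\ge d(x_{n_m},x)-o(1)$, which is just (\ref{eq:DconvChar}) applied with $y=z$, forces $d(x_{n_m},z)-d(x_{n_m},x)\to 0$. Passing to a further subsequence I may assume $d(x_{n_m},x)\to a:=\limsup_m d(x_{n_m},x)$, so that $d(x_{n_m},z)\to a$ as well. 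Here necessarily $a>0$, since $a=0$ would give $x_{n_m}\to x$ and $x_{n_m}\to z$ simultaneously, whence $x=z$, against the choice of $z$.

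The geometric core is then the application of the SR property (\ref{eq:SR}). Set $\delta=\delta(a,\overline d)>0$. For all large $m$ one has $d(x_{n_m},x)<a+\delta$ and $d(x_{n_m},z)<a+\delta$, so the tail $T=\{x_{n_m}:m\ge N\}$ lies in $B_{a+\delta}(x)\cap B_{a+\delta}(z)$; since the centers satisfy $d(x,z)=\overline d\ge\overline d$, (\ref{eq:SR}) with radius parameter $r=a$ yields $\mathrm{rad}(T)\le\mathrm{rad}(B_{a+\delta}(x)\cap B_{a+\delta}(z))\le a-\delta$. As the Chebyshev radius is an infimum, there is a point $w$ with $d(x_{n_m},w)\le a-\delta/2$ for all $m\ge N$, so that $\limsup_m d(x_{n_m},w)\le a-\delta/2<a=\limsup_m d(x_{n_m},x)$, contradicting the fact that $x$ is an asymptotic center of $(x_{n_m})_{m\in\N}$. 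This proves (\ref{eq:NewStrongDom}); taking $y=z$ in the conclusion gives $d(x_n,x)<d(x_n,z)$ for $n\ge n_{0}$, which is precisely (\ref{eq:mwl}), so $(x_n)_{n\in\N}$ polarly converges to $x$.

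The step I expect to be the main obstacle is the scale-matching in the SR application: one must align the asymptotic radius $a$ of the extracted subsequence with the radius parameter $r$ of Definition~\ref{def:R}, ensuring both that the tail genuinely lies inside balls of radius $r+\delta$ and that the resulting bound $r-\delta$ sits strictly below $a$. A secondary point requiring care is that the Chebyshev radius is only an infimum, so the lowering point $w$ is obtained approximately; reserving the slack $\eta=\delta/2$ keeps the final inequality strict and delivers the contradiction.
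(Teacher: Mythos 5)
Your proposal is correct and follows essentially the same route as the paper: negate the conclusion to extract a subsequence with $d(x_{k_n},x)-d(x_{k_n},z)\to 0$ and $d(x_{k_n},x)\to r>0$, then apply the SR property with $\overline d=d(x,z)$ to produce a point whose asymptotic distance to the subsequence is strictly below $r$, contradicting (\ref{eq:DconvChar}). Your two flagged refinements (ruling out $r=0$ via $x=z$, and reserving slack $\delta/2$ because the Chebyshev radius is only an infimum) are details the paper passes over silently, and you handle them correctly.
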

\begin{proof}
If the assertion is false, by (\ref{eq:DconvChar}), we can find $z\neq x$ and a subsequence $(x_{k_{n}})_{n\in\mathbb{N}}$
such that $d(x_{k_{n}},x)-d(x_{k_{n}},z)\to0$. Passing again to a
subsequence we can also assume that $d(x_{k_{n}},x)\to r>0$. Set
$\overline{d}=d(x,z)$ and $\delta=\delta(r,\overline{d})$.
Since, for large $n$, $x_{k_{n}}\in B_{r+\delta}(x)\cap B_{r+\delta}(z)$,
we can deduce from (\ref{eq:SR}) the existence of $y\in E$ such
that  $d(x_{k_{n}},y)<r-\delta$, in contradiction to (\ref{eq:DconvChar}).
\end{proof}
\begin{corollary}
\label{co:strong-dom}
Let $(E,d)$ be a SR metric space. Let $(x_{n})_{n\in\mathbb{N}}\subset E$
be a bounded sequence and let $x\in E$ be such that (\ref{eq:DconvChar}) holds true. Then, for any compact set $\mathcal{K}\subset E$ such that $x\notin \mathcal{K}$ there exists $\overline{n}=\overline{n}(\mathcal{K})$ such that for any $n\geq \overline{n}$ and for any $z\in \mathcal{K}$ (\ref{eq:NewStrongDom}) holds true.
\end{corollary}
 By taking into account the characterization of $\Delta$-limit given by Proposition \ref{pr:Dconvergence}, 
Lemma \ref{lem:strong-dom} guarantees that if $x_n \stackrel{\Delta}{\rightarrow} x$, then $x_n\rightharpoondown x$. This proves claim {\bf a)}, since, as pointed out in Remark \ref{re:PimplyD}, polar convergence always implies $\Delta$-convergence to the same point.

\begin{lemma}
Let $(E,d)$ be a SR metric space. Let $I_{as}$ be defined by (\ref{eq:Ias})
relative to a given sequence $(x_{n})_{n\in\mathbb{N}}\subset E$. Then, for any
$\overline{d}>0$, there exists $\varepsilon>0$ such that if $x,y\in E$
satisfy $\max(I_{as}(x),I_{as}(y))<\inf I_{as}+\varepsilon$, then $d(x,y)<\overline{d}$.
\end{lemma}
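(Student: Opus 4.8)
The plan is to prove the statement by contradiction, using the Staples rotundity inequality (\ref{eq:SR}) to manufacture, out of two far-apart almost-minimizers of $I$, a competitor point lying strictly below $\inf I$. Throughout I set $R := \inf I$; since the sequences considered here are bounded, $I(y)=\limsup_n d(x_n,y)<+\infty$ for every $y$, and hence $R<+\infty$.

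I would first dispatch the trivial case $R=0$, where no rotundity is needed. For every $n$ the triangle inequality gives $d(x,y)\le d(x_n,x)+d(x_n,y)$, and passing to $\limsup_n$ yields $d(x,y)\le I(x)+I(y)\le 2\max(I(x),I(y))$; so the choice $\varepsilon=\overline{d}/2$ already forces $d(x,y)<\overline{d}$.

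For the substantive case $R>0$, fix $\overline{d}>0$, put $r=R$, let $\delta=\delta(R,\overline{d})>0$ be the modulus furnished by Definition \ref{def:R}, and set $\varepsilon=\delta$. Assume for contradiction that $\max(I(x),I(y))<R+\delta$ while $d(x,y)\ge\overline{d}$. Since $\limsup_n d(x_n,x)=I(x)<R+\delta$ and similarly for $y$, there is an $N$ with $d(x_n,x)<R+\delta$ and $d(x_n,y)<R+\delta$ for all $n\ge N$; equivalently $x_n\in B_{r+\delta}(x)\cap B_{r+\delta}(y)=:X$ for $n\ge N$. As $d(x,y)\ge\overline{d}$, inequality (\ref{eq:SR}) applies and gives $\mathrm{rad}(X)\le r-\delta=R-\delta$.

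The crux is to convert this radius bound into a contradiction. Because the Chebyshev radius is an infimum of covering radii, I can choose $z\in E$ with $\sup_{w\in X}d(z,w)<R-\delta/2$. Since $x_n\in X$ for all $n\ge N$, it follows that $I(z)=\limsup_n d(x_n,z)\le\sup_{n\ge N}d(x_n,z)\le\sup_{w\in X}d(z,w)<R-\delta/2<R=\inf I$, contradicting the definition of $R$. Therefore $d(x,y)<\overline{d}$, which proves the lemma (and, letting $\varepsilon\to 0$, yields the uniqueness of the asymptotic center asserted in claim \textbf{b}). The only point requiring care is precisely this last passage: the rotundity inequality only bounds an infimum, so one must select $z$ strictly inside the optimal radius and remember that merely the tail of $(x_n)_{n\in\mathbb{N}}$ needs to sit in $X$ for $\limsup_n d(x_n,z)$ to be controlled by $\sup_{w\in X}d(z,w)$.
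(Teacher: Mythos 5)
Your proposal is correct and follows essentially the same route as the paper: the paper's (very terse) proof also takes $\varepsilon=\overline{d}/2$ when $\inf I=0$ and $\varepsilon=\delta(\inf I,\overline{d})$ otherwise, relying on (\ref{eq:SR}) to contradict the definition of $\inf I$. You have merely supplied the details the paper leaves implicit, including the careful selection of a near-optimal Chebyshev center $z$ and the observation that only the tail of the sequence needs to lie in the intersection of balls.
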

\begin{proof}
If $\inf I_{as}=0$ one can take $\varepsilon=\frac{\overline{d}}{2}$,
otherwise, since $E$ is a SR metric space (see (\ref{eq:SR})), one can take $\varepsilon=\delta(\inf I_{as},\overline{d})$. 
\end{proof}
Two remarkable and immediate consequences of the above lemma trivially
follow.
\begin{corollary}
\label{co:uni+Cau}
Let $(E,d)$ be a SR metric space and let $I_{as}$ be as above. Then
\end{corollary}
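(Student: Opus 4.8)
The plan is to read off both consequences directly from the clustering estimate of the preceding lemma, which says that the sublevel sets $\{y\in E : I(y)<\inf I+\varepsilon\}$ shrink to a single point as $\varepsilon\to 0$. The only auxiliary fact I need is that $I$ is $1$-Lipschitz: from the triangle inequality $|d(x_n,y)-d(x_n,z)|\le d(y,z)$ one obtains $|I(y)-I(z)|\le d(y,z)$ upon taking $\limsup_n$. Since the sequence $(x_n)_{n\in\mathbb{N}}$ is bounded, $I(y)<+\infty$ for every $y\in E$, hence $\inf I<+\infty$ and the notion of a minimizing sequence for $I$ is meaningful.

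For the uniqueness of the asymptotic center (claim {\bf b})), I would suppose that $x$ and $y$ both minimize $I$, so that $I(x)=I(y)=\inf I$. Then for \emph{every} $\overline{d}>0$ the hypothesis $\max(I(x),I(y))=\inf I<\inf I+\varepsilon$ of the lemma is satisfied for the associated $\varepsilon$, so the lemma yields $d(x,y)<\overline{d}$. As $\overline{d}>0$ is arbitrary, $d(x,y)=0$, i.e. $x=y$.

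For the existence of the asymptotic center in the complete case (yielding claims {\bf b}) and {\bf c}), equivalently $\Delta$-completeness), I would show that every minimizing sequence $(y_k)_{k\in\mathbb{N}}$, i.e. one with $I(y_k)\to\inf I$, is Cauchy. Indeed, fixing $\overline{d}>0$ and taking the corresponding $\varepsilon$ from the lemma, for all sufficiently large $k,l$ one has $I(y_k),I(y_l)<\inf I+\varepsilon$, whence $d(y_k,y_l)<\overline{d}$; since $\overline{d}>0$ is arbitrary this is precisely the Cauchy property. If $E$ is complete, then $y_k\to y^\ast$ for some $y^\ast\in E$, and the Lipschitz continuity of $I$ gives $I(y^\ast)=\lim_k I(y_k)=\inf I$, so $y^\ast$ is an asymptotic center.

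I expect no genuine obstacle here, since the substantive work has already been carried out in the lemma. The one point worth making explicit is the continuity of $I$, which is what converts the Cauchy limit of a minimizing sequence into an actual minimizer of $I$; everything else is a direct substitution into the lemma's hypothesis, with $\inf I$ playing the role of the near-minimal values.
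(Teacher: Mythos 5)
Your proof is correct and is exactly the argument the paper intends when it says the two claims ``trivially follow'' from the preceding lemma: both the uniqueness of the minimizer and the Cauchy property of minimizing sequences are obtained by feeding near-minimal values of $I$ into the lemma and letting $\overline{d}\to 0$. Your additional observations (the $1$-Lipschitz continuity of $I$ and the passage to an actual minimizer under completeness) belong to the subsequent Corollary~\ref{cor:unicity} rather than to this statement, but they are correct and consistent with the paper's treatment.
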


\begin{itemize}
\item [{(i)}] the functional $I_{as}$ admits at most one minimum point;
\item [{(ii)}] any minimizing sequence of the functional $I_{as}$ is a Cauchy sequence. 
\end{itemize}

\begin{corollary}
\label{cor:unicity}Let $(E,d)$ be a SR complete metric space, then
any bounded sequence $(x_{n})_{n\in\mathbb{N}}\subset E$,
admits a unique asymptotic center. Moreover any
minimizing sequence for $I_{as}$ converges to the asymptotic center
of the sequence $(x_{n})_{n\in\mathbb{N}}$. 
\end{corollary}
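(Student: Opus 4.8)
The plan is to deduce both assertions directly from the two consequences of the previous lemma, namely that $I$ has at most one minimum point (part (i) of the preceding corollary) and that every minimizing sequence for $I$ is Cauchy (part (ii)), combined with the completeness of $E$. The uniqueness of the asymptotic center is already furnished by part (i), so the genuine content is to produce at least one minimum point and to identify the limit of an arbitrary minimizing sequence.

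First I would fix a minimizing sequence $(y_k)_{k\in\N}$ for $I$, that is, a sequence with $I(y_k)\to\inf I$. By part (ii) of the previous corollary this sequence is Cauchy, and since $(E,d)$ is complete it converges in metric to some $y^\ast\in E$.

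The key step is to observe that the functional $I$ defined in (\ref{eq:Ias}) is $1$-Lipschitz. Indeed, for any $y,y'\in E$ the triangle inequality gives $d(x_{n},y)\le d(x_{n},y')+d(y',y)$ for every $n$, whence, passing to the $\limsup$, $I(y)\le I(y')+d(y',y)$, and by symmetry $|I(y)-I(y')|\le d(y,y')$. In particular $I$ is continuous, so $I(y^\ast)=\lim_{k} I(y_k)=\inf I$. Thus $y^\ast$ is a minimum point of $I$, i.e. an asymptotic center of $(x_{n})_{n\in\N}$; together with the uniqueness from part (i) this establishes the existence of a unique asymptotic center.

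Finally, for the ``moreover'' assertion I would note that the argument just given applies verbatim to any minimizing sequence: it is Cauchy by part (ii), converges by completeness, and its limit is a minimum point of $I$ by the continuity of $I$. Since there is only one such minimum point, every minimizing sequence converges to the unique asymptotic center. I do not expect a serious obstacle here; the only ingredient not quoted directly from the preceding corollary is the elementary Lipschitz continuity of $I$, and this is precisely what allows the limit of a Cauchy minimizing sequence to be recognised as an actual minimizer rather than merely an approximate one.
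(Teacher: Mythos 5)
Your proposal is correct and is essentially the argument the paper intends: the paper states Corollary \ref{cor:unicity} without proof as an immediate consequence of parts (i) and (ii) of the preceding corollary together with completeness, and your filling-in (minimizing sequence is Cauchy, hence convergent, and its limit is a genuine minimizer by the $1$-Lipschitz continuity of $I$) is exactly the standard way to make that deduction precise. No gaps.
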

In other words, Corollary \ref{cor:unicity} states in particular
that in a SR complete metric space every bounded sequence $(x_{n})_{n\in\mathbb{N}}$
has a unique asymptotic center $\cenas x_{n}$, so that claim {\bf b)} is proved. 

\section{Polar neighborhoods}
\label{se:polar topology}

In this section we discuss the question, posed by Dhompongsa, Kirk and Panyanak in \cite{DKP}, if $\Delta$-convergence (or polar convergence) can be associated with a topology.

Let $(E,d)$ be a metric space, let $Y\subset E$, $x\notin Y$, and
let 
\begin{equation}
\mathcal{N}_{Y}(x)=\bigcap_{y\in Y}\mathcal{N}_{y}(x)=\left\{ z\in E\,|\, d(z,x)<d(z,y),\forall y\in Y\right\} ,\label{eq:pol-neigh}
\end{equation}
where
\begin{equation}
\label{eq:Nyx}
\mathcal{N}_{y}(x)=\left\{ z\in E\,|\, d(z,x)<d(z,y)\right\} .
\end{equation}
In other words, the set $\mathcal{N}_{Y}(x)$ is the set
of all points in $E$ which are strictly closer to $x$ than to $Y$.
\begin{remark}
\label{rem4.1}A trivial restatement of Definition \ref{de:pol} is
that, if $x\in E$ and $(x_{n})_{n\in\N}\subset E$ , then $x_{n}\rightharpoondown x$
if and only if, for any finite set $Y\not\ni x$, there exists $M(Y)\in\N$
such that $x_{n}\in\mathcal{N}_{Y}(x)$ for all $n\ge M(Y)$.\end{remark}
\begin{definition}
\label{de:p-nbd}Let $x\in E$. We shall call \emph{polar neighborhoods
}(briefly p-nbd) of the point $x$ all the subsets $V\subset E$ containing
a set $\mathcal{N}_{Y}(x)$ given by (\ref{eq:pol-neigh}), where
$Y\subset E$ is any finite set such that $x\notin Y$.\end{definition}
\begin{remark}
Polar convergence can be finally tested, as it follows from Remark
\ref{rem4.1}, by using polar neighborhoods. Indeed, if $x\in E$
and $(x_{n})_{n\in\N}\subset E$, then $x_{n}\rightharpoondown x$
if and only if $x_{n}\in V$ for large $n$ for any polar neighborhood
$V$ of $x$.
\end{remark}
One can easily see that the set of polar neighborhoods of a given
point $x$ is a filter of parts of $E$. It is well known (see \cite[Prop. 1.2.2 ]{Bourbaki})
that the union of such filters is a neighborhood base of a (unique)
topology $\mathcal{T}$ if and only if the following further property
is satisfied
\begin{equation}
\label{eq:TN}
\forall x\in E,\forall V\;\mbox{p-nbd. of }x,\exists U\;\mbox{p-nbd of }x\;\mbox{s.t. }\forall z\in U:\; V\;\mbox{is a p-nbd of }z.
\end{equation}
By Definition~\ref{de:p-nbd} the above condition can be more explicitly
stated as 
\begin{equation}
\forall x,y\in E,\, x\neq y,\exists Y\;\mbox{finite set s.t. }\forall z\in\mathcal{N}_{Y}(x)\exists Z\;\mbox{s.t. }\mathcal{N}_{Z}(z)\subset\mathcal{N}_{y}(x).
\label{eq:BTN}
\end{equation}
When (\ref{eq:TN}) (or equivalently (\ref{eq:BTN})) is satisfied we shall call $\mathcal{T}$ \emph{polar
topology} induced by $d$ on $E$. Since, for any $x,y\in E$ if $2r=d(x,y)$,
$B_{r}(x)\subset\mathcal{N}_{y}(x)$, polar topology is in general
a coarser topology of that usually induced by $d$ by the classical
notion of neighborhood. We shall refer to the latter topology as
\emph{strong topology} induced by $d$ when we want to distinguish
it from the polar one.

We shall show now that (\ref{eq:TN}) is true
in some cases and false in others, and even when (\ref{eq:TN}) is true, $\mathcal{T}$
not always coincides with the strong topology induced by $d$.
\begin{example}
\label{Ex:discrete}Let $\delta$ be the discrete metric on $E$.
Then, setting $d=\delta$ in (\ref{eq:Nyx}), it is easy to see that $\mathcal{N}_{y}(x)=\left\{ x\right\}$
for all $x,y\in E$, $x\neq y$. Therefore the polar topology induced
by $\delta$ is the discrete topology as well as the strong topology.
\end{example}
In Hilbert spaces polar topology coincides with weak topology, and
thus is different from the strong topology in the infinite-dimensional
case.
\begin{example}
\label{Ex:H}
Let $E$ be a Hilbert space. Then, for any $x,y\in E$,
$x\neq y$, we have that $z\in\mathcal{N}_{y}(x)$ means $\|z-x\|^{2}-\|z-y\|^{2}<0$,
namely $(2z-(x+y))\cdot(y-x)<0$. If, with an invertible change of
variables, we set $x+y=2a$, $y-x=v$, we get that $\mathcal{N}_{y}(x)=\left\{ z\in E\,|\,(z-a)\cdot v<0\right\} $.
So $\mathcal{N}_{y}(x)$ gives, for $x,y\in E$, $x\neq y$, a base
of the weak topology.
\end{example}
Combining the previous two examples we can discuss the following case.
\begin{example}
\label{Ex:H-1}
Let $E$ be a finite dimensional vector space and let
$d$ be defined as $d(x,y)=|x-y|+\delta(x,y)$ where $\delta$ is
as in Example \ref{Ex:discrete}. Then it is easy to check that the
polar convergence induced by $d$ agrees with the natural convergence of
vectors while the strong topology induced by $d$ is the discrete
topology. In other words the strong topology induced by $d$ is the discrete
topology (which, see Example \ref{Ex:discrete}, coincides with both the strong and the polar topology
induced by $\delta$, i.e. by neglecting the contribution of $|x-y|$), 
while the polar topology induced by $d$
is the natural topology (which, see Example \ref{Ex:H}, coincides with both the strong and
the polar topology induced by $|x-y|$, i.e. by neglecting
the contribution of $\delta$). 
\end{example}
Finally we show an example in which (\ref{eq:TN}) is not satisfied
and therefore polar topology does not exist.
\begin{example}
Let $E=\R$ and let $D$ be the Dirichlet function (i.e. $D(x)=1$
for $x\in\Q$ and $D(x)=0$ for $x\notin\Q$ ). Let $d$ be defined
as $d(x,y)=|x-y|+(1+D(x-y))\delta(x,y)$ with $\delta$ as in Example
\ref{Ex:discrete}. It is easy to see that $d$ is a metric (indeed,
if $y\notin\left\{ x,z\right\} $ then $d(x,z)\leq|x-z|+2\leq|x-y|+1+|y-z|+1\leq d(x,y)+d(y,z)$).
We shall prove that (\ref{eq:BTN}) does not hold.
Fix any $\overline{x}$, $\overline{y}\in\R$, $\overline{x}\neq\overline{y}$,
$|\overline{x}-\overline{y}|\leq1$ and $\overline{y}-\overline{x}\notin\Q$.
Consider $\mathcal{N}_{Y}(\overline{x})$ with an arbitrary finite
set $Y\not\ni\bar{x}$ and let $\varepsilon>0$ such that $2\varepsilon<|\overline{x}-y|$
for all $y\in Y$. Then fix $\overline{z}\in\R$ such that $|\overline{x}-\overline{z}|<\varepsilon$
and $\overline{x}-\overline{z}\notin\Q$. This choice implies that
\begin{equation}
\forall y\in Y:\quad2|\overline{x}-\overline{z}|<|\overline{x}-y|\quad\mbox{and so}\quad|\overline{x}-\overline{z}|<|\overline{z}-y|\label{eq:x-z}
\end{equation}
 by the triangle inequality. Therefore 
\begin{equation}
d(\overline{x},\overline{z})=|\overline{x}-\overline{z}|+1<|\overline{z}-y|+1\leq d(\overline{z},y)\quad\forall y\in Y,\label{eq:x-z-2}
\end{equation}
proving that $\overline{z}\in\mathcal{N}_{Y}(\overline{x})$. In the
same way, let $Z$ be a given finite set such that $\overline{z}\notin Z$
and fix $\eta>0$ such that $2\eta<|\overline{z}-z|$ $\forall z\in Z$.
Fix $v\in\R$ such that $|\overline{z}-v|<\eta$ and $\overline{x}-v\in\Q$
(therefore $v-\overline{y}\notin\Q,\;|v-\overline{z}|\notin\Q$).
With analogous estimates to (\ref{eq:x-z}) and (\ref{eq:x-z-2})
we see that $v\in\mathcal{N}_{Z}(\overline{z})$. Finally, since $v-\overline{y}\notin\Q$,
$|\overline{x}-\overline{y}|\leq1$ and $\overline{x}-v\in\Q$, 
\[
d(v,\overline{y})=|v-\overline{y}|+1\leq|v-\overline{x}|+|\overline{x}-\overline{y}|+1\leq|v-\overline{x}|+2=d(v,\overline{x}).
\]
Hence for any $\mathcal{N}_{Y}(\overline{x})$ we can find $\overline{z}\in\mathcal{N}_{Y}(\overline{x})$
such that every neighborhood $\mathcal{N}_{Z}(\overline{z})\not\subset\mathcal{N}_{\overline{y}}(\overline{x})$
and so (\ref{eq:BTN}) does not hold.
\end{example}

\section{Polar convergence in Banach spaces}
\label{se:polarBS}

In a Hilbert space the polar limit of a sequence is also its weak
limit and vice versa, see Example \ref{Ex:H}. The original
argument, although brought up to prove a weaker statement, is due
to Opial, see \cite{Opial} where the following condition is also formulated see  \cite[Condition (2)]{Opial} and \cite{ST} for more details. We can interpret this condition, in reflexive spaces, as an equivalence between weak and polar convergence.

\begin{definition}[Opial condition]
\label{def:StrongOpial} 
One says that a Banach space $(E, \|\;\|)$ satisfies
Opial condition if for every sequence $(x_{n})_{n\in\N}\subset E$, which
is weakly convergent to a point $x\in E$, 
\begin{equation}
\liminf_n\|x_{n}-x\|<\liminf_n\|x_{n}-y\|\mbox{ for every }y\in E, y\neq x.
\label{eq:OpCon}
\end{equation}
\end{definition}
Note that (\ref{eq:OpCon}) with the $<$ sign replaced by $\leq$ remains equivalent in a SR metric space, (the equivalence follows by repeating the argument in the proof of Lemma \ref{lem:strong-dom}).

It is worth mentioning that van Dulst in \cite{VD} proved that a separable uniformly convex
Banach space can be provided with an equivalent norm satisfying the Opial
condition. 

Opial also gives an example, see \cite[Section 5]{Opial}, which we can interpret
in terms of polar convergence, of a bounded sequence in $L^{p}((0,2\pi))$,
$p\neq2$, $1<p<\infty$, whose polar limit and weak limit do not coincide. 
We shall come back soon with more powerful tools to the case of $L^p$ but we first present
two examples, suggested by Michael Cwikel, which illustrate some differences
between polar and weak convergence in a Banach space. They respectively show that a weakly convergent
sequence in $\ell^1$ does not need to be bounded and that a weakly converging sequence in $\ell^\infty$ does not need to have a polar limit.
\begin{example}
\label{ex:C1}
Let $E$ be the sequence space $\ell^{1}$, and let, for any $k\in\mathbb{N}$, 
$x_{k}:=k(\delta_{k,n})_{n\in\mathbb{N}}$
(we have used the Kronecker delta values, i.e. $\delta_{k,n}=1$ for $k=n$ and $\delta_{k,n}=0$
otherwise). Since, for each fixed sequence $\alpha=(\alpha_{n})_{n\in\mathbb{N}}\in\ell^{1}$,
we have $\lim_{k\to\infty}\left\Vert x_{k}-\alpha\right\Vert _{\ell^{1}}-\left\Vert x_{k}\right\Vert _{\ell^{1}}=\left\Vert \alpha\right\Vert _{\ell^{1}}$ we get that the sequence $(x_{k})_{k\in\mathbb{N}}$ is polarly convergent
to the zero element of $\ell^{1}$ . 
\end{example}
\begin{example}
\label{ex:C2}
Let $E$ be the sequence space $\ell^{\infty}$ and
let $x_{k}:=(\delta_{k,n})_{n\in\mathbb{N}}$ for each $k\in\mathbb{N}$.
Clearly the sequence $(x_{k})_{k\in\mathbb{N}}$ converges weakly
to the zero of $\ell^{\infty}$. However, zero is not a polar limit
of the sequence, since for the sequence $\alpha=(1,1,1,\dots)$ we
have $1=d(0,x_{k})=d(\alpha,x_{k})$ for all $k$. Let $\beta:=(\beta_{n})_{n\in\mathbb{N}}$
be an arbitrary nonzero element of $\ell^{\infty}$. Then $\beta_{n_{0}}\ne0$
for some integer $n_{0}$ and we can define the sequence $\gamma:=(\gamma_{n})_{n\in\mathbb{N}}$
by setting $\gamma_{n_{0}}=0$ and $\gamma_{n}=\beta_{n}$ for all
$n\ne n_{0}$. We see that $d(\beta,x_{k})\ge d(\gamma,x_{k})$
for all $k>n_{0}$ which shows that $\beta$ cannot be the polar limit
of $(x_{k})_{k\in\mathbb{N}}$. 
\end{example}

Even if most of the assertions in the following statements  hold under weaker assumptions as well, we shall work in this section in uniformly convex and uniformly smooth Banach spaces i.e. in a Banach space $E$ such that both $E$ and $E^\prime$ satisfy SR (see \cite[p. 32]{J-L}).
This implies, in particular, that $E$ is reflexive.
A duality relation which holds in  this setting is given in \cite{ST}.
We shall use in this section a notion of duality map which to every $x$ belonging to a uniformly convex and uniformly smooth Banach space $E$ associates the unique element $x^\prime$ in the dual space $E^\prime$ such that
\begin{equation}
\label{eq:duality}
\langle x, x^\prime\rangle =\|x\|^2=\|x^\prime\|^2.
\end{equation}
Note that  $x^\prime= \|x\| x^\ast$, where $x^\ast$ is as 
in \cite{C} or in \cite{R}.
In other terms, $x^\ast$ is the Fr\'ech\`et derivative of the norm $\|x\|$ while $x^\prime$ is the Fr\'ech\`et derivative of $\frac{1}{2} \|x\|^2$, $x^\prime=x$ when $E$ is a Hilbert space.
Here and in what follows weak convergence of a sequence $(x_n)_{n\in\N}$ in a Banach space to a point $x$ is denoted as $x_{n}\rightharpoonup x$. We shall need the following technical lemma.
\begin{lemma}
\label{le:Tlemma}
Let $(E,\|\,\|)$ be a Banach space.
Let $f:\R_+\rightarrow \R_+$ be a function such that $\lim_{t \rightarrow +\infty} f(t)=0$. Let $(\xi_n)_{n\in\N}\subset E^\prime$ be such that
for any $z\in E$, $\|z\|=1$, and for all $s>0$ there exists $\overline{n}=\overline{n}(s,z)$ such that 
for $n\geq \overline{n}$
\begin{equation}
\label{eq:xprimeineq}
|\langle\xi_n,z\rangle|\leq f(s\|\xi_n\|) \|\xi_n\|.
\end{equation}
Then $\xi_n$ weakly$^\ast$ converges to $0$.
\end{lemma}
\begin{proof}
It suffices to prove the assertion for a subsequence, therefore we can assume without restrictions that there exists $C_1>0$ such that $\|\xi_n\|\geq C_1$ for any $n$.
By taking $s=1$ in (\ref{eq:xprimeineq}) we deduce that 
$\left(f(\|\xi_n\|) \|\xi_n\|\right)^{-1}\xi_n$ is pointwise bounded and by the Uniform Boundedness Principle (see \cite[Theorem 2.2]{B-book}) it is bounded. Therefore, by the assumption on $f$, we deduce that exists $C_2>0$ such that $\|\xi_n\|\leq C_2$. Then, by applying (\ref{eq:xprimeineq}) we deduce $|\langle\xi_n,z\rangle|\leq C_2 f(s\|\xi_n\|)$, and so the assertion follows, 
since $s \|\xi_n\|\geq s C_1$, by letting $s\rightarrow +\infty$. 
\end{proof}
The following result, which shows the duality relation between the weak and the polar convergence in Banach spaces, follows from \cite[Theorem 3.8]{ST}, which is 
formulated in an equivalent way in terms of $(x_n^\ast)_{n\in\N}$, combined with \cite[Theorem 3.5]{ST}.

\begin{theorem}[Duality principle]
\label{thm:conj}
Let $(E,\|\,\|)$ be a uniformly convex and uniformly smooth Banach space. 
Let $(x_n)_{n\in\N}\subset E$ be any given 
sequence.
Then $x_{n}\rightharpoondown 0$ in $E$
if and only if $x_{n}^\prime\rightharpoonup 0$ in $E^\prime$. 
\end{theorem}

\begin{proof}
Assume $x_n\rightharpoondown 0$. Note that for any $z\in E$ and for any $n\in\N$ we can find a unitary coefficient $\alpha_n=\alpha_n(z)$ such that 
\begin{equation}
\label{eq:alpha_n}
|\langle x_n^\prime,z\rangle| =-\alpha_n \langle x_n^\prime,z\rangle,
\end{equation}
then, given $\varepsilon>0$, since 
the scalar coefficient 
$\varepsilon \alpha_n\in \partial B_{\varepsilon}(0)$,
by Corollary \ref{co:strong-dom}, 
with $\mathcal{K}= (\partial B_{\varepsilon}(0)) z$, 
there exists $\overline{n}=\overline{n}(\varepsilon,z)$ such that for any $n\geq \overline{n}$,
\begin{equation}
\label{eq:polineq}
0 \leq  \|x_n+\varepsilon \alpha_n(z) z\|^2-\|x_n\|^2.
\end{equation}
Since $\frac{1}{2}\|x\|^2$ is uniformly Fr\'ech\`et differentiable on the normalized elements of $E$
(see \cite[pag. 61]{L-T}), 
by combining (\ref{eq:polineq}) with (\ref{eq:alpha_n}), we have for large $n$
\begin{equation}
\label{eq:ineq}
0 \leq  
\|x_n\|^2 
\left( \left\|\frac{x_n}{\|x_n\|}+ \frac{\varepsilon \alpha_n }{\|x_n\|}\right\|^2-1
\right)
\leq
-2\varepsilon 
|\langle x_n^\prime, z\rangle| + \|x_n\|^2 r\left(\frac{\varepsilon}{\|x_n\|}\right), 
\end{equation}
where $r$ is a function from $\R_+$ into $\R_+$ such that 
$\frac {r(s)}{s}\rightarrow 0$ as $s\rightarrow 0$.
Then, for large $n$,
\begin{equation}
\label{eq:ineq}
|\langle x_n^\prime, z\rangle | \leq 
\left(\frac{\|x_n\|}{2 \varepsilon} r\left(\frac{\varepsilon}{\|x_n\|}\right)\right) \|x_n\|,
\end{equation}
so, since $\|x_n\|=\|x_n^\prime\|$, by applying Lemma \ref{le:Tlemma}, with
$f(t)=2^{-1} t \, r(t^{-1})$, we deduce 
$x_n^\prime \rightharpoonup 0$.
Conversely, let $x_n^\prime \rightharpoonup 0$, then for any $z$ we get
\begin{equation}
\label{eq:angle}
\|x_n\|^2= \langle x_n^\prime, x_n\rangle= \langle x_n^\prime ,x_n-z \rangle + \langle x_n^\prime, z\rangle
\leq 
\|x_n\| \, \|x_n-z\|+o(1),
\end{equation}
so the thesis follows from Lemma \ref{lem:strong-dom}.
\end{proof}
Note that the spaces $E$ and $E^\prime$ in the duality principle above are interchangeable, and therefore weak convergence and polar convergence can be also interchanged in the formulation of Theorem~\ref{thm:conj}.
Opial's examples in \cite{Opial} of sequences in $L^p$ spaces, $1<p<\infty$, whose weak and polar limit do not coincide, can be easily explained in the light of Theorem~\ref{thm:conj}: one takes normalized sequences of oscillating functions $(f_n)_{n\in\N}$ which weakly converge to zero but have positive and negative oscillations of different heights. Taking into account that the dual elements $f_n^\prime$ in $L^p$ are given by the formula
$f^\prime=\|f\|^{2-p} |f|^{p-2}f$, if $p\neq 2$, 
we easily deduce that the positive and negative oscillations of $f_n^\prime$ are no longer balanced,  $f_n^\prime\not\rightharpoonup 0$ and so, by Theorem  \ref{thm:conj} $f_n\not\rightharpoondown 0$.

This situation is completely different if we know that $(f_n)_{n\in\N}$ is bounded and converges pointwise $a.e.$ to $0$. 
In such a case, by weak compactness, it also weakly converges to $0$ and the dual sequence $(f_n^\prime)_{n\in\N}$ still converges $a.e.$ to $0$ and therefore weakly.
So, by Theorem \ref{thm:conj}, $f_n\rightharpoondown 0$.
In other terms we have the following statement.
\begin{remark}
\label{re:ae->w&p}
Bounded $a.e.$ converging sequences in $L^p$ are both weakly and polarly convergent to their $a.e.$ limits.
\end{remark}

This is always the case in $\ell^p$ spaces, with $1<p<+\infty$, where weak convergence implies pointwise convergence.
So, for $1<p<+\infty$, the spaces $\ell^p$ satisfy Opial condition, which is therefore not a characteristic condition of Hilbert spaces. This does not extend to the cases $p=1$ and $p=+\infty$ discussed in Examples \ref{ex:C1} and \ref{ex:C2}.
Finally, the following result, in contrast to the general situation considered in Example \ref{ex:C1}, was proved in \cite[Theorem 3.5]{ST} and is in some sense implicit in Lemma \ref{le:Tlemma}.

\begin{proposition}
\label{pr:Boundedness}
If $(E,\|\,\|)$ is a 
uniformly convex and
uniformly smooth Banach space, then
any polarly converging sequence in $E$ is bounded.
\end{proposition}

\begin{proof}
If $x_n \rightharpoondown x$, then by Theorem \ref{thm:conj}, $(x_n-x)^\prime_{n\in\N}$ is bounded in $E^\prime$. By definition, this implies that $(x_n-x)_{n\in\N}$ is bounded in $E$.
\end{proof}

\section{\label{se:spaces}Hadamard spaces}

\begin{definition}
\label{de:parallineq}
Let $(E,d)$ be a metric space, we shall say that $(E,d)$ satisfies the ``parallelogram inequality'' if given any
points $x, y\in E$, there exists a point $m\in E$ such that for every point $z\in E$,
\begin{equation}
\label{cat}
\tag{PI}
d^2(z, m) \leq 
\frac{1}{2}	\left(d^2(z, x) 
+ d^2(z, y)\right)
-\frac{1}{4}d^2(x, y).
\end{equation}
\end{definition}

\begin{definition}
\label{de:CAT0}
We shall say that a metric space $(E,d)$ is a $\mathrm{CAT}(0)$ space (see \cite[Definition 2.9]{EspFern}) or is a space of Alexandrov nonpositive curvature (see 
\cite[Definition 2.3.1 and the subsequent remark]{JostBook}) if it satisfies inequality (\ref{cat}) and it is geodesic. 
\end{definition}

\begin{remark}
\label{re:PI->SR}
Note that a metric space $(E,d)$ which satisfies inequality (\ref{cat}) is SR.
\end{remark}

In order to clarify why we are calling (\ref{cat}) a ``parallelogram inequality'' let us first observe that, when $x$ and $y$ have a middle point $c$ (i.e. $\exists c\in E$ such that $d(x,c)=d(y,c)=\frac{1}{2}d(x,y)$) then $c$ is the point $m$ whose existence is asked in (\ref{cat}).

Given any points $x,y\in E$ if they have a middle point then
\begin{equation}
\label{eq:d=2r}
d(x, y)= 2\rad(\{x,y\}).
\end{equation}
Moreover, spaces which satisfy inequality (\ref{cat}), have the following property.
\begin{proposition}
\label{pr:middle}
Let $(E,d)$ be a metric space which satisfies inequality (\ref{cat}). Any pair of points $x,y\in E$ admits a (unique)
middle point $c$ if and only if (\ref{eq:d=2r}) holds.
In such cases $c=\cen(\{x,y\})$ and $m=c$ in $(\ref{cat})$.
\end{proposition}

\begin{proof}
Fix $x,y \in E$ and set $X=\{x,y\}$. Let $(c_n)_{n\in\N}$ be a minimizing sequence of the functional $I_X$ defined in (\ref{eq:I_X}). Then, by replacing $z$ by $c_n$ in (\ref{cat}) we deduce that $d(c_n,m)\rightarrow 0$ and so $m=\cen(\{x,y\})$.
The center is unique since (by Remark \ref{re:PI->SR}) $E$ is SR.
\end{proof}

Note that on a linear normed space, by setting $z^\prime=2m-z$, i.e. a reflection of the point $z$ about point $m$, the points $x,y,z$ and $z^\prime$ are vertices of a parallelogram and so 
(\ref{cat}) becomes
\begin{equation}
\label{eq:PIneq}
d^2(z,z^\prime)+
d^2(x,y)
\leq
2\left(d^2(z,x)+ d^2(z,y)\right)
\end{equation}
and expresses an inequality involving the squares of the diagonals and of the four sides of the parallelogram and justifies the name of (PI).
Moreover, since we can associate to any parallelogram a dual one whose sides are the diagonals of the previous one, equality must hold in (\ref{eq:PIneq}). Therefore the space can be equipped with a scalar product, i.e. linear complete spaces which satisfy (\ref{cat}) are Hilbert spaces.

\begin{remark}
\label{re:middle}
Note that if a metric space $(E,d)$ satisfies (\ref{eq:d=2r}) for all $x,y$ and satisfies inequality (\ref{cat}), then by recursively applying Proposition \ref{pr:middle}, we deduce that for any pair of points $x, y\in E$ and for any dyadic number $\alpha\in [0,1]$ there exists a (unique) point $c_\alpha\in E$ such that 
\begin{equation}
\label{eq:c_alpha}
d(c_\alpha,y)=\alpha d(x,y)\quad \mbox{  and } \quad d(c_\alpha,x)=(1-\alpha) d(x,y).
\end{equation}

We can also deduce, by recursively applying (\ref{cat}), that 
for any $z\in E$
\begin{equation}
\label{catalpha}
d^2(z, c_\alpha)  
\leq 
(1-\alpha) d^2(z, x) + \alpha d^2(z, y)- 
d(c_\alpha,x) d(c_\alpha,y)
\end{equation}
or, equivalently,
\begin{equation}
\label{catalphas}
d^2(z, c_\alpha)  
\leq 
(1-\alpha) d^2(z, x) + \alpha d^2(z, y)- 
\alpha(1-\alpha)d^2(x, y).
\end{equation}
\end{remark}

When $(E,d)$ is in addition a complete metric space, 
$c_\alpha$ is defined by continuity for any $\alpha\in [0,1]$ and gives rise to a (unique) geodesic, contained in $E$, which joins $x$ to $y$.
Therefore any complete metric space $(E,d)$ which satisfies inequality 
(\ref{cat}) and (\ref{eq:d=2r}) for all $x,y$ is a (uniquely) geodesic space.
Finally (\ref{catalphas}) also holds for any $\alpha\in [0,1]$.

\begin{definition}
\label{de:Hadamard}
A metric space $(E,d)$ is an Hadamard space if it is a complete $\mathrm{CAT}(0)$ space.
\end{definition}

\begin{remark}
\label{re:Hilbert}
It is easy to conclude, given our observations above about the parallelogram inequality, that linear Hadamard spaces are Hilbert spaces.
\end{remark}

As a rule of thumb one may consider Hadamard spaces as a ``metric generalization'' of Hilbert spaces, and expect that Hadamard spaces have some properties typical of Hilbert spaces but not found in all Banach spaces. 

A few general properties of Hadamard spaces follow.

\begin{definition}[convex sets]
\label{de:convex}
Let $(E,d)$ be a geodesic metric space and $C\subset E$.
We shall say that $C$ is convex if  any two points $x,y\in C$ can be connected by a geodesic contained in $C$.
\end{definition} 
Note that, when $(E,d)$ is a geodesic $\mathrm{CAT}(0)$ (resp. Hadamard) space and $C\subset E$ is a complete (resp. closed) convex set, then $C$ satisfies (\ref{cat}).
Indeed, the middle point between any two points $x,y$ belongs to the geodesic joining them 
which is contained in $C$.
Therefore, $C$ satisfies (\ref{eq:SR}) and so for any $x\in E$, any minimizing sequence of elements in $C$ of the functional $I_{\{x\}}$ (defined by (\ref{eq:I_X}) for $X=\{x\}$) is a Cauchy sequence (see Corollary \ref{co:uni+Cau}). 
Therefore, since $C$ is complete,
this sequence converges to the unique point $p_C(x)$ of $C$ which minimizes the distance from $x$.
It is easy to prove that $p_C:E \rightarrow E$ is a projector (i.e. that $p_C\circ p_C=p_C$). It is called the distance-minimizing projector onto $C$. 

\begin{lemma}[Pythagoras Inequality]
\label{le:PythIneq}
Let $(E,d)$ be a $\mathrm{CAT}(0)$ space.
Let $\Gamma\subset E$ be a geodesic,
then for all $x\in E$ and for all $y\in \Gamma$
\begin{equation}
\label{eq:PythIneq}
d^2(x,y)\geq d^2(x,p_\Gamma(x))+d^2(p_\Gamma(x),y).
\end{equation}
\end{lemma}

\begin{proof}
For any $\alpha\in [0,1]$, by suitably applying (\ref{catalpha})
with 
$c_\alpha$ as in (\ref{eq:c_alpha}),
one gets that
$d^2(x,c_\alpha)\leq (1-\alpha)d^2(x,p_\Gamma(x))+\alpha d^2(x,y)-\alpha(1-\alpha) d^2(p_\Gamma(x),y)$ and, since $c_\alpha\in \Gamma$ and therefore $d(x,p_\Gamma(x))\leq d(x,c_\alpha)$, we deduce 
$d^2(x,y)\geq d^2(x,p_\Gamma(x))+(1-\alpha)d^2(p_\Gamma(x),y)$ the thesis follows by taking $\alpha$ as close to $0$ as we want.
\end{proof}

The above property easily extends to any compact convex set.

\begin{corollary}
\label{co:PythIneqGen}
Let $(E,d)$ be a $\mathrm{CAT}(0)$ space.
Let $C\subset E$ be a compact convex set, then for all $x\in E$ and for all $y\in C$
\begin{equation}
\label{eq:PythIneqC}
d^2(x,y)\geq d^2(x,p_C(x))+d^2(p_C(x),y).
\end{equation}
\end{corollary}

\begin{lemma}
\label{le:nonexpansive}
Let $(E,d)$ be a $\mathrm{CAT}(0)$ space.
Let $C\subset E$ be a complete convex set, then the function $p_C:E\rightarrow E$ is a nonexpansive map, i.e.
\begin{equation}
\label{eq:nonexpansive}
d(p_C(x),p_C(y))\leq d(x,y)
\end{equation}
holds for all $x,y\in E$.
\end{lemma}

\begin{proof}
By applying (\ref{eq:PythIneqC}) we deduce the two symmetric inequalities
\begin{equation}
\label{eq:Pythagora1}
d^2(x,p_C(y))\geq d^2(x,p_C(x))+ d^2(p_C(x),p_C(y))
\end{equation}
\begin{equation}
\label{eq:Pythagora2}
d^2(y,p_C(x))\geq d^2(y,p_C(y))+ d^2(p_C(y),p_C(x)).
\end{equation}
Denoting by $m$ the middle point between $p_C(x)$ and $y$ we get from (\ref{cat}) and 
(\ref{eq:Pythagora2})
\begin{equation}
\label{eq:sdiag1}
d^2(x,m) 
\leq \frac{1}{2}d^2(x,p_C(x))+ \frac{1}{2}d^2(x,y)-\frac{1}{4}d^2(y,p_C(y))-\frac{1}{4}d^2(p_C(y),p_C(x))
\end{equation}
\begin{equation}
\label{eq:sdiag2}
d^2(p_C(y),m)
\leq \frac{1}{4}d^2(y,p_C(y))+\frac{1}{4}d^2(p_C(y),p_C(x)).
\end{equation}
Then, by the Schwartz inequality and by adding (\ref{eq:sdiag1}) to (\ref{eq:sdiag2}), we obtain 
\begin{equation}
\label{eq:Holder^2}
d^2(x,p_C(y))
\leq 2(d^2(x,m)+d^2(m,p_C(y)))\\
\leq 
d^2(x,p_C(x))+ d^2(x,y).
\end{equation}
From (\ref{eq:Pythagora1}) combined with (\ref{eq:Holder^2})
the assertion follows.
\end{proof}

\section{Definitions of convergence by via test maps}
\label{se:WCTM}

We consider the following definitions which extend to metric spaces the notion of weak convergence on Hilbert spaces by substituting the linear forms by distance-minimizing projections onto geodesic segments.
In the case of normed spaces this does not necessarily define the usual weak convergence, since distance-minimizing projections onto  straight lines do not generally coincide with continuous linear forms.
\begin{definition}
\label{de:Jost}
Let $(E,d)$ be a Hadamard space. A point $x\in E$ is called the weak limit of a sequence $(x_n)_{n\in\N}\subset E$ if for every
geodesic $\Gamma$ starting at $x$, $(p_\Gamma(x_n))_{n\in\N}$ converges to $x$. In this case, we say that $(x_n)_{n\in\N}$ weakly converges to $x$ and we shall write 
$x_n\rightharpoonup x$.
\end{definition}

As far as we know the definition is due to J. Jost (see \cite[Definition 2.7]{Jost}) for sequences in geodesic $\mathrm{CAT}(0)$ spaces.
Subsequently in \cite{Sosov} Sosov has introduced essentially the same definition under the name of $\varphi$-convergence, and a variant of it called $\psi$ convergence (which utilizes projections onto geodesics trough $x$ rather that geodesics starting from $x$).
As we show below in Theorem~\ref{te:Equivalence}, in Hadamard spaces weak convergence can be equivalently defined as follows.
\begin{definition}
\label{def:weak}
Let $(E,d)$ be a Hadamard space. 
A point $x\in E$ is called the weak limit of a sequence $(x_n)_{n\in\N}\subset E$ if for every compact convex set $C\subset E$ containing $x$, the sequence $(p_C(x_n))_{n\in\N}$ converges to $x$.
\end{definition}
Finally we can also drop the requirement of convexity in the definition of weak convergence if we use a multivalued projection, which allows to bring up the following definition which makes sense in every metric space.
\begin{definition}
\label{def:polartest}
Let $(E,d)$ be a metric space. 
A point $x\in E$ is called the weak limit of a sequence $(x_n)_{n\in\N}\subset E$ if for every compact set $C\subset E$ containing $x$ and for all sequences $(x^\prime_n)_{n\in\N}\subset C$ of minimal distance from $(x_n)_{n\in\N}$, the sequence $(x^\prime_n)_{n\in\N}$ converges to $x$.
\end{definition}
By analogy with Hilbert spaces it is natural that Hadamard spaces may satisfy Opial condition (see Definition \ref{def:StrongOpial}). We have the following result (see \cite{EspFern}, \cite{Bacak1}).
\begin{theorem}
\label{te:Equivalence}
Let $(E,d)$ be a Hadamard space, let $x\in E$ and $(x_n)_{n\in\N}\subset E$ be a sequence. Then the following statements are equivalent:
\begin{itemize}
\item[a)] $(x_n)_{n\in\N}$ weakly converges to $x$ in the sense of Definition 
\ref{def:polartest};
\item[b)] $(x_n)_{n\in\N}$ weakly converges to $x$ in the sense of Definition \ref{def:weak};
\item[c)] $(x_n)_{n\in\N}$ weakly converges to $x$ in the sense of Definition \ref{de:Jost}.
\item[d)] $(x_n)_{n\in\N}$ is polarly convergent to $x$.
\end{itemize}
\end{theorem}

\begin{proof}
a)$\Rightarrow$ b) 
is trivial and b) $\Rightarrow$ c) follows since any geodesic is a compact convex set.

c)$\Rightarrow$ d)
Fix $y\in E$, $y\neq x$ and let $\Gamma$ be the geodesic connecting $x$ to $y$, then, by assumption, $p_\Gamma(x_n)\rightarrow x$.
Then $d(x_n,y)\geq d(x_n,x)+o(1)$ 
and so the assertion follows from Proposition \ref{pr:Dconvergence} and the equivalence in SR spaces of $\Delta$ and polar convergence.

d)$\Rightarrow$ a)
Let $C$ be a compact set such that $x\in C$.
By compactness, if $(x^\prime_n)_{n\in\N}$ does not converge to $x$, exists $y\in E$, $y\neq x$ such that (a subsequence of) $(x^\prime_n)_{n\in\N}$ converges to $y$. 
Then, we deduce arguing as in the previous part,
that $d(x_n,x)\geq d(x_n,x^\prime_n)=d(x_n,y)+ o(1)$ in contradiction to Lemma \ref{lem:strong-dom}. 
\end{proof}

\begin{remark}
\label{re:Equivalence}
Note that the equivalence between a) and d), which makes sense in every metric space, holds for every SR space. Indeed, a) $\Rightarrow$ d) easily follows by applying a) for any given $y\in E$ by taking $C=\{x,y\}$.
\end{remark}

\begin{remark}
\label{re:Jost}
In \cite{Jost} the following properties of weak convergence are established: 
\begin{enumerate}
	\item Every bounded sequence has a weakly convergent subsequence (\cite[Theorem 2.1]{Jost})
	\item If $(x_n)_{n\in\N}$ is a bounded sequence, it has a renamed subsequence such that the corresponding sequence of 
``mean values'' or ``centers of gravity'' (see \cite[Definition 2.3]{Jost})	converges (see \cite[Theorem 2.2]{Jost}). 
	\item Consequently, every closed convex set is sequentially weakly closed and every lower semicontinuous convex function is weakly lower semicontinuous. 
	\item Every convex coercive function has a minimum (see \cite[Theorem 2.3]{Jost}).
\end{enumerate}
\end{remark}

Items (1), (3) and (4) also follow from the equivalence in Theorem \ref{te:Equivalence} and the following simple result. 
\begin{proposition}
\label{pr:CC->PC}
Let $(E,d)$ be a Hadamard space. Then every closed convex set $C\subset E$ is polarly closed, i.e. if $(x_n)_{n\in\N}\subset C$ and $x_n\rightharpoondown x$, then $x\in C$.
\end{proposition}
\begin{proof}
Let $I_{as}$ be defined as in (\ref{eq:Ias}). Then, by Lemma \ref{le:nonexpansive},
$I_{as}(p_C(x))=\limsup_n d(p_C(x),x_n)$ $=\limsup_n d(p_C(x),p_C(x_n))\leq \limsup_n d(x,x_n)=I_{as}(x)$, so $I_{as}(p_C(x))\leq I_{as}(x)$ i.e. $x\in C$.
\end{proof}

The question of topology associated with $\Delta$-convergence (or weak convergence) in Hadamard spaces was discussed by Monod in \cite{Monod} where he proposed another natural definition of weak convergence by
defining a topology $\mathcal T_C$ as the coarsest topology in which all closed convex sets are closed. We immediately see from (3) or from Proposition \ref{pr:CC->PC}
that weak convergence is finer than $\mathcal T_C$ convergence.
So, in view of property (1) above the two modes of convergence agree on bounded sequences whenever topology $\mathcal T_C$ is separated.
On the other side Monod suggests examples of some spaces 
(see lines between examples 20 and 21 in \cite{Monod}) on which $\mathcal T_C$ should be not separated. Therefore, in such a case, since the weak limit is obviously unique, the two modes of convergence would be different at least for nets.
In other words, whenever topology $\mathcal T_C$ gives a notion of limit different from the weak one, it has bad properties like nonuniqueness of the limit.

Moreover, on Hadamard spaces, Monod introduces another topology $\mathcal T_w$ which easily implies $\Delta$-convergence (and thus, weak convergence), but it is unclear how coarse is $\mathcal T_w$.  
$\mathcal T_w$ is defined (\cite[Par. 3.7]{Monod}) as the weakest topology under which for all $x,y\in E$ the map $z\mapsto d(x,z)-d(y,z)$ is continuous. In Hilbert spaces, if $z_k$ converges to zero in $\mathcal T_w$, then 
$\|x-z_k\|=\|x\|+\|z_k\|+o(1)$ for any $x$. Calculating squares of this relation one gets, assuming that $\|z_k\|$ is bounded away from zero, that 
$(x,z_k/\|z_k\|)\to \|x\|$, which, if is true for some $x$, is false for $-x$. This contradiction implies that $\mathcal T_w$-convergence in Hilbert space implies norm convergence, contrary to the statement in \cite[Example 18]{Monod}.

We refer the reader for further properties of weak convergence in Hadamard spaces, such as properties of Fejer monotone sequences, to the book of Bacak \cite{Bacak}.

Obviously the study of the notions of convergence introduced so far becomes interesting in metric spaces more general than Hadamard spaces where Opial condition, by analogy with $L^p$ spaces, does not hold.
For such spaces we believe that, in order to have a generalization of weak convergence, rather than $\Delta$-convergence, to metric spaces, definitions \ref{def:weak} and \ref{de:Jost} must be modified. Indeed, the proof of Theorem \ref{te:Equivalence} still applies in $L^p$, since the convex sets in $L^p$ spaces are SR (see also Remark \ref{re:Equivalence}). Therefore, all definitions of weak convergence given in this section are still equivalent, in the case of $L^p$, to polar convergence.
One may perhaps introduce a generalization of weak convergence in metric spaces that will be distinct from polar convergence, by replacing distance-minimizing projections in Definition~{de:Jost} with other maps, but addressing this issue is outside the scope of this note. When two distinct modes of convergence of weak type are defined, one may also consider a notion of ``double weak'' convergence in which one requires that a point is both a weak and a polar limit of the sequence (as happens, for instance, for $a.e.$ sequences in $L^p$ spaces, see Remark \ref{re:ae->w&p}). An application of this notion of double convergence, for which there is no compactness result of Banach-Alaoglu type, in $L^p$ spaces, is given at the end of the next section.

\section{Applications}
\label{se:app}

\subsection{Convergence of iterations to fixed points }
Fixed points of nonexpansive maps (maps with the Lipschitz constant
equal to 1) have been obtained as asymptotic centers of iterative
sequences (see \cite[Corollary to Theorem 2]{Edelstein} for suitable Banach spaces (see also \cite{C} and its review \cite{R}) and
\cite[Claim 2.7]{Staples} for suitable metric spaces).
Also in bounded complete SR metric spaces the existence of fixed points for a nonexpansive map $T$ easily follows, since, for any $x$, the asymptotic center of the iterations sequence  $(T^n(x))_{n\in\N}$ is a fixed point of 
$T$.
However, in general, it is not possible to get a fixed point as a polar limit of an iterations sequence. Indeed, Theorem \ref{thm:compactness} only gives the existence of the polar limit for a subsequence and the latter, as pointed out in the following example, (where the convergence is even in metric) is not necessarily a fixed point.

\begin{example}
Consider the map $T(x)=-x$ on a unit ball in any Banach space. Zero
is the unique fixed point of $T$ and the iterations sequence
$((-1)^{n}x)_{n\in\N}$ has zero as asymptotic center. On the
other hand, if $x\neq 0$, this sequence has no subsequence that $\Delta$-converges to zero and has two subsequences which converge to $x$ and $-x$, which are not fixed points.
\end{example}

On a bounded complete SR metric space, we can give the following characterization of polarly converging iterations sequences of a nonexpansive map.

\begin{proposition}
\label{pr:fixed}
Let $(E,d)$ be a bounded complete SR metric space. Let $x\in E$ and let $T:E\rightarrow E$ be a nonexpansive map.
Then, the following properties are equivalent.
\begin{itemize}
\item[(a)] The iterations sequence $(T^n(x))_{n\in\N}$ is polarly convergent;
\item[(b)] The polar limit of any subsequence of $(T^n(x))_{n\in\N}$  (when it exists) is a fixed point of $T$.
\end{itemize}
\end{proposition}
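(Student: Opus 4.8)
The plan is to write $y_n:=T^n(x)$ and to combine two ingredients: on the analytic side, the elementary fact that for any fixed point $p$ of the nonexpansive map $T$ the real sequence $d(y_n,p)$ is nonincreasing, hence convergent, because $d(y_{n+1},p)=d(Ty_n,Tp)\le d(y_n,p)$; on the structural side, the results already available for complete SR spaces, namely Theorem~\ref{thm:polarcompactness} (every bounded sequence has a polarly convergent subsequence, which applies since $E$ is bounded), Corollary~\ref{cor:unicity} (uniqueness of the asymptotic center), and the characterization in Remark~\ref{re:DconvergenceCenter} that a polar (hence $\Delta$-) limit is an asymptotic center of \emph{every} subsequence. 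Throughout I write $I(z)=\limsup_n d(y_n,z)$ for the asymptotic radius functional of $(y_n)$.

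For the implication (a)$\Rightarrow$(b) I would assume $y_n\rightharpoondown p$. By property (iii) every subsequence also polarly converges to $p$, so $p$ is the only subsequential polar limit and it remains to check $Tp=p$. Polar convergence of the whole sequence makes $p$ the unique minimizer of $I$ (Corollary~\ref{cor:unicity}). From $d(y_{n+1},Tp)=d(Ty_n,Tp)\le d(y_n,p)$ and the invariance of $\limsup$ under an index shift one gets $I(Tp)\le I(p)=\inf I$, so $Tp$ also minimizes $I$ and therefore $Tp=p$ by uniqueness; thus the polar limit of every subsequence is a fixed point.

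For (b)$\Rightarrow$(a), boundedness of $E$ together with Theorem~\ref{thm:polarcompactness} and property (iv) reduce the claim to showing that all subsequential polar limits of $(y_n)$ coincide. Suppose, for contradiction, that subsequences $(y_{k_n})$ and $(y_{j_n})$ polarly converge to distinct points $p\ne q$; by hypothesis (b) both are fixed points, so $d(y_n,p)\to\ell_p$ and $d(y_n,q)\to\ell_q$ by the monotonicity noted above. Since $p$ is an asymptotic center of $(y_{k_n})$ (Remark~\ref{re:DconvergenceCenter}), testing against $q$ gives $\ell_p=\limsup_n d(y_{k_n},p)\le\limsup_n d(y_{k_n},q)=\ell_q$, and the symmetric computation gives $\ell_q\le\ell_p$, so $\ell_p=\ell_q=:\ell$; moreover $\ell>0$, since otherwise $y_n$ would converge in metric to both $p$ and $q$. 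Now invoke the SR condition: with $\overline{d}=d(p,q)>0$ and $\delta=\delta(\ell,\overline{d})>0$, for all large $n$ one has $y_n\in X:=B_{\ell+\delta}(p)\cap B_{\ell+\delta}(q)$, so (\ref{eq:SR}) yields $\mathrm{rad}(X)\le\ell-\delta$. Choosing $w$ with $\sup_{u\in X}d(w,u)\le\ell-\delta/2$ gives $d(w,y_n)\le\ell-\delta/2$ for large $n$, whence $\limsup_n d(y_{k_n},w)\le\ell-\delta/2<\ell=\limsup_n d(y_{k_n},p)$, contradicting that $p$ is an asymptotic center of $(y_{k_n})$. Hence the subsequential polar limit is unique and (a) follows.

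I expect the main obstacle to be the (b)$\Rightarrow$(a) direction, where the whole argument hinges on upgrading ``$p$ and $q$ are fixed points'' into monotonicity of $n\mapsto d(y_n,p)$; this is what forces the two limiting radii to coincide and lets me feed a single radius $\ell$ into the SR inequality to produce a point asymptotically strictly closer to the iterates than the alleged asymptotic center $p$. That the fixed-point hypothesis is indispensable here is illustrated by the map $T(x)=-x$: the iterations $((-1)^n x)$ have two subsequential limits that are not fixed points and the sequence does not polarly converge, so (b) is exactly the condition that rules out such oscillation.
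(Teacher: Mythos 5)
Your proof is correct, but the argument for the main implication (b)$\Rightarrow$(a) follows a genuinely different route from the paper's. The paper shows that under (b) the sequence $(T^n(x))_{n\in\N}$ is \emph{maximal} for the ordering $\preceq$ of Remark \ref{re:ordering}: for any subsequence with asymptotic radius $r$, it extracts a polarly convergent sub-subsequence whose limit $y$ is, by (b), a fixed point, and then uses the same Fej\'er-type monotonicity $d(T^{n+1}(x),y)\le d(T^n(x),y)$ that you exploit to propagate the bound $r+\varepsilon$ to all large indices of the \emph{full} sequence, obtaining $\radas T^n(x)\le r$. Maximality plus $\Delta$-completeness then gives strong-$\Delta$ convergence to the asymptotic center, and claim \textbf{a}) (the equivalence of $\Delta$- and polar convergence in SR spaces) finishes. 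You instead prove uniqueness of subsequential polar limits directly: the monotonicity forces $d(y_n,p)$ and $d(y_n,q)$ to converge to a common value $\ell$, and you then re-run the Staples rotundity inequality (\ref{eq:SR}) by hand on the pair $p\ne q$ to manufacture a point beating the asymptotic center; property (iv) then upgrades uniqueness of subsequential limits to polar convergence of the whole sequence. Your route bypasses the $\preceq$/strong-$\Delta$ machinery and Theorem \ref{thm:compactness} at the cost of a direct appeal to the SR modulus, essentially reproving uniqueness of asymptotic centers in this special situation; the paper's route is shorter because it reuses the already-established structural results. A further point in your favour: for (a)$\Rightarrow$(b) the paper merely says this is ``obvious by (iii)'', which only yields that the polar limit $p$ is the unique subsequential limit, not that $Tp=p$; your argument $I(Tp)\le I(p)=\inf I$ together with Corollary \ref{cor:unicity} supplies exactly the missing step (it is the same computation the paper invokes informally earlier in the section when asserting that the asymptotic center of an iteration sequence is a fixed point).
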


\begin{proof}
We shall only prove that (b) implies (a) (the converse implication being obvious by (iii) in Remark \ref{re:polarVsDelta}).
We shall prove, in particular, that 
if (b) holds true, then the sequence $(T^n(x))_{n\in\N}$ is maximal with respect to the relation $\preceq$ introduced in Remark \ref{re:ordering}.
To this purpose let $(T^{k_n}(x))_{n\in\N}$ be a subsequence of $(T^n(x))_{n\in\N}$ and let $r=\radas T^{k_n}(x)$.
By Theorem \ref{thm:compactness} we can replace $(T^{k_n}(x))_{n\in\N}$ by a subsequence (which we still denote by $(T^{k_n}(x))_{n\in\N}$, note that the value of $r$ does not increase)   
which is polarly converging to a point $y$.
Then, if $\varepsilon>0$ is an arbitrarily fixed real number,
there exists $\bar{n}\in\N$ such that 
$d(T^{k_{\bar{n}}}(x),y)<r+\varepsilon$. Since, by assumption (b), $y$ is a fixed point for 
$T$, we get, by induction, $d(T^n(x),y)<r+\varepsilon$, for any $n\geq k_{\bar{n}}$. 
(Indeed, if 
$n\geq k_{\bar{n}}$, we have
$d(T^{n+1}(x),y)=d(T^{n+1}(x),T(y))\leq d(T^n(x),y)<T^{k_{\bar{n}}}(x)<r+\varepsilon$, since $T$ is nonexpansive).
Therefore $\radas T^n(x) < r+\varepsilon$.
By the arbitrariness of $\varepsilon>0$ we get $\radas T^n(x)\leq r=\radas T^{k_n}(x)$.
\end{proof}

For spaces where the Opial condition holds, additional conditions,
such as asymptotic regularity, are imposed to assure weak convergence
of iterative sequences (see \cite{Browder},
\cite[Theorem 2]{Opial}, \cite{BR} and \cite{BBR}). 
However, the argument used in \cite{Opial} suggests 
that the role of Opial condition consists in deducing weak convergence
of iterations from their polar convergence. 

\begin{definition}
\label{de:PAR}
Let $(E,d)$ be a metric space, $T:E\rightarrow E$ and $x\in E$. We say that the map $T$ satisfies the 
PAR-condition (polar asymptotical regularity condition) at $x$ if, 
for any $y\in E$ polar limit of a subsequence $(T^{k_n}(x))_{n\in\N}$ of the iterations sequence $(T^n(x))_{n\in\N}$
and for all $\varepsilon>0$, there exists $\bar{n}\in\N$ such that
$d(T^{k_n-1}(x),y)< d(T^{k_n}(x),y) +\varepsilon$ for any $n\geq \bar{n}$.
\end{definition}

\begin{theorem}
\label{thm:polar2fix}
Let $(E,d)$ be a bounded complete SR metric space. Let $T:E\rightarrow E$ be a nonexpansive map which satisfies the PAR-condition at a point $x\in E$.
Then, the sequence $(T^n(x))_{n\in\N}$ polarly converges to a fixed point.
\end{theorem}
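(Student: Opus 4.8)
The plan is to reduce the statement to Proposition~\ref{pr:fixed} by verifying its condition (b): that the polar limit of every polarly convergent subsequence of $(T^n(x))_{n\in\N}$ is a fixed point of $T$. Once (b) is established, the implication (b)$\,\Rightarrow\,$(a) of Proposition~\ref{pr:fixed} yields that the whole iterations sequence $(T^n(x))_{n\in\N}$ polarly converges; and since the sequence is a subsequence of itself and its polar limit exists, that limit is a fixed point by (b), which is exactly the thesis.

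First I would fix a polarly convergent subsequence $(T^{k_n}(x))_{n\in\N}$ with polar limit $y$; such subsequences exist because $E$ is bounded and complete SR, so Theorem~\ref{thm:compactness} applies. Let $I(z)=\limsup_n d(T^{k_n}(x),z)$ be the functional (\ref{eq:Ias}) associated with this subsequence. The core computation combines the PAR-condition (Definition~\ref{de:PAR}) with nonexpansiveness of $T$: given $\varepsilon>0$, the PAR-condition provides $\overline n$ with $d(T^{k_n-1}(x),y)<d(T^{k_n}(x),y)+\varepsilon$ for $n\ge\overline n$, while nonexpansiveness gives
\[
d(T^{k_n}(x),T(y))=d\bigl(T(T^{k_n-1}(x)),T(y)\bigr)\le d(T^{k_n-1}(x),y).
\]
Chaining these two estimates and passing to the $\limsup$ yields $I(T(y))\le I(y)+\varepsilon$, and the arbitrariness of $\varepsilon$ gives $I(T(y))\le I(y)$.

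To conclude that $T(y)=y$ I would invoke the uniqueness of asymptotic centers. Since $y$ is the polar limit of $(T^{k_n}(x))_{n\in\N}$, it is also its $\Delta$-limit (Remark~\ref{re:PimplyD}) and hence an asymptotic center of the subsequence (Remark~\ref{re:DconvergenceCenter}); equivalently $y$ minimizes $I$, so $I(y)=\inf I$. The inequality $I(T(y))\le I(y)=\inf I$ then forces $T(y)$ to be a minimizer of $I$ as well, and by the uniqueness of the asymptotic center in a complete SR metric space, established in Corollary~\ref{cor:unicity} (claim~{\bf b}), we must have $T(y)=y$. This proves condition (b) of Proposition~\ref{pr:fixed} and completes the argument.

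The step I expect to require the most care is the passage from $I(T(y))\le I(y)$ to $T(y)=y$: the whole mechanism rests on identifying the polar limit $y$ with the \emph{unique} minimizer of $I$, so the argument genuinely uses both the coincidence of $\Delta$-convergence and polar convergence in complete SR spaces and the uniqueness of the asymptotic center. The remaining bookkeeping—checking that the PAR-condition is invoked with exactly the right subsequence and that Proposition~\ref{pr:fixed} is applicable to the whole sequence once (b) holds—is routine.
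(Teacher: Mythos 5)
Your proposal is correct and follows essentially the same route as the paper: verify condition (b) of Proposition~\ref{pr:fixed} by chaining nonexpansiveness with the PAR-condition to get $d(T^{k_n}(x),T(y))\le d(T^{k_n-1}(x),y)<d(T^{k_n}(x),y)+\varepsilon$, conclude that $T(y)$ is also an asymptotic center of the subsequence, and invoke uniqueness of asymptotic centers in complete SR spaces to get $T(y)=y$. Your write-up merely makes explicit a few steps the paper leaves implicit (that $y$ minimizes $I$ because polar limits are $\Delta$-limits and hence asymptotic centers, and the final application of (b)$\Rightarrow$(a)).
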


\begin{proof}
We shall prove (b) in Proposition \ref{pr:fixed}. Let $(T^{k_n}(x))_{n\in\N}$ be a polarly converging subsequence of $(T^n(x))_{n\in\N}$ and let $y$ be the polar limit of 
$(T^{k_n}(x))_{n\in\N}$. Since $T$ is a nonexpansive map which satisfies the PAR-condition in $x\in E$ we have that 
$d(T^{k_n}(x),T(y))\leq d(T^{k_n-1}(x),y)< d(T^{k_n}(x),y)+\varepsilon$ for any $\varepsilon>0$ for large $n$. Therefore also $T(y)$ is an asymptotic center of $(T^{k_n}(x))_{n\in\N}$ and, since in SR spaces the asymptotic center is unique, we get $T(y)=y$.
\end{proof}

\subsection{Brezis-Lieb Lemma without $a.e.$ convergence}
\hspace{0.5 cm}
The celebrated Brezis-Lieb Lemma (\cite{Brezis-Lieb}) is stated for (bounded) $a.e.$ converging
sequences that, see Remark \ref{re:ae->w&p}, are also polarly converging. 
Remarkably, convergence
$a.e.$ is not needed when $p=2$, which suggests that some version of
Brezis-Lieb Lemma may hold for other $p$ if $a.e.$ convergence in the
assumption is replaced by the double weak convergence mentioned in the previous section (weak and polar convergence of $L^p$). Indeed, in \cite{ST} the following result is proved.
\begin{theorem}
\label{thm:newbl} Let $(\Omega,\mu)$ be a measure
space. Assume that $u_{n}\rightharpoonup u$ and $u_{n}\rightharpoondown u$
in $L^{p}(\Omega,\mu)$. If $p\ge3$ then

\begin{equation}
\int_{\Omega}|u_{n}|^{p}d\mu\ge
\int_{\Omega}|u|^{p}d\mu+\int_{\Omega}|u_{n}-u|^{p}d\mu+o(1).\label{eq:BL}
\end{equation}
\end{theorem}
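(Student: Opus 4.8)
The plan is to reduce the whole statement to a single pointwise convexity inequality and then to recognize the two resulting error terms as dual pairings that vanish under, respectively, the weak and the polar hypotheses. Writing $w_{n}:=u_{n}-u$, the weak hypothesis $u_{n}\rightharpoonup u$ says $w_{n}\rightharpoonup 0$ in $L^{p}(\Omega,\mu)$, while the polar hypothesis $u_{n}\rightharpoondown u$ will be turned, through Theorem~\ref{thm:conj}, into weak convergence of the normalized duality images $(w_{n})^{*}$. Recall that for $1<p<\infty$ the space $L^{p}$ is uniformly convex and uniformly smooth, and its normalized duality map is $v^{*}=|v|^{p-2}v/\|v\|_{p}^{\,p-1}$, so $|v|^{p-2}v$ is a scalar multiple of $v^{*}$.

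The analytic core is the pointwise inequality, valid for all real $a,b$ precisely when $p\ge 3$:
\begin{equation}
\label{eq:ptwise}
|a+b|^{p}\ge |a|^{p}+|b|^{p}+p\,|a|^{p-2}a\,b+p\,|b|^{p-2}b\,a.
\end{equation}
Using the degree-$p$ homogeneity and the symmetry $(a,b)\mapsto(-a,-b)$, I would reduce (\ref{eq:ptwise}) to showing that $h(t):=|1+t|^{p}-1-|t|^{p}-pt-p|t|^{p-2}t\ge 0$ for all $t\in\R$, an elementary one-variable calculus check carried out on the sign cases $t\ge 0$, $-1\le t\le 0$, $t\le -1$ (note $h(0)=h'(0)=0$). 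The restriction $p\ge 3$ is genuine, and this is where I expect the only real subtlety: evaluating (\ref{eq:ptwise}) at $a=b=1$ gives $2^{p}-2-2p$, which is nonnegative iff $p\ge 3$ (with equality at $p=3$), so no inequality of this shape can hold for $p<3$. This is exactly why the statement excludes the range $2<p<3$, even though the case $p=2$ is already covered by the exact Hilbert-space identity.

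Granting (\ref{eq:ptwise}), I would set $a=u$ and $b=w_{n}$ (so that $a+b=u_{n}$), integrate over $\Omega$, and obtain
\begin{equation}
\label{eq:afterint}
\int_{\Omega}|u_{n}|^{p}\,d\mu\ge \int_{\Omega}|u|^{p}\,d\mu+\int_{\Omega}|w_{n}|^{p}\,d\mu+p\int_{\Omega}|u|^{p-2}u\,w_{n}\,d\mu+p\int_{\Omega}|w_{n}|^{p-2}w_{n}\,u\,d\mu.
\end{equation}
The first cross term tends to $0$ because $|u|^{p-2}u\in L^{p'}(\Omega,\mu)$ (indeed $\big\||u|^{p-2}u\big\|_{p'}^{p'}=\|u\|_{p}^{p}<\infty$, since $(p-1)p'=p$) while $w_{n}\rightharpoonup 0$. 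For the second term, weak convergence forces $(u_{n})_{n\in\N}$, hence $(w_{n})_{n\in\N}$, to be bounded, so $\|w_{n}\|_{p}$ is bounded; by Theorem~\ref{thm:conj} one has $(w_{n})^{*}\rightharpoonup 0$ in $L^{p'}$, whence $\int_{\Omega}|w_{n}|^{p-2}w_{n}\,u\,d\mu=\|w_{n}\|_{p}^{\,p-1}\int_{\Omega}(w_{n})^{*}u\,d\mu\to 0$, because $u\in L^{p}=(L^{p'})'$ pairs weakly with $(w_{n})^{*}$ and a bounded factor times a null sequence is null. Substituting these two vanishing contributions into (\ref{eq:afterint}) gives precisely (\ref{eq:BL}).

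One gap must be closed, since Theorem~\ref{thm:conj} requires $\liminf_{n}\|w_{n}\|_{p}>0$. I would dispose of it by the subsequence principle: it suffices to prove $\liminf_{n}\big(\mathrm{LHS}-\mathrm{RHS}\big)\ge 0$, so I pass to an arbitrary subsequence. Either it has a further subsequence along which $\|w_{n}\|_{p}\to 0$, in which case $u_{n}\to u$ strongly and both sides of (\ref{eq:BL}) converge to the same value, so the inequality is trivial along it; or $\liminf\|w_{n}\|_{p}>0$ along the subsequence, and the argument above applies verbatim. In either case the desired $\liminf\ge 0$ holds. Thus the only genuine difficulty is the verification of (\ref{eq:ptwise}) together with the identification of the sharp range $p\ge 3$; once that pointwise inequality is secured, the remaining convergence bookkeeping (one term handled by weak convergence, the other by polar convergence via Theorem~\ref{thm:conj}) is routine.
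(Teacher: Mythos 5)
The paper itself contains no proof of Theorem \ref{thm:newbl}; it defers entirely to the reference \cite{ST}, and your argument is correct and is essentially the one used there: the pointwise convexity inequality $|a+b|^{p}\ge |a|^{p}+|b|^{p}+p|a|^{p-2}ab+p|b|^{p-2}ba$, valid exactly for $p\ge 3$ (and checkable by the one-variable reduction you indicate), integrated with $a=u$, $b=u_{n}-u$, the first cross term vanishing by weak convergence against $|u|^{p-2}u\in L^{p'}$ and the second by polar convergence via Theorem \ref{thm:conj}. Your disposal of the degenerate case $\liminf_{n}\|u_{n}-u\|_{p}=0$ by the subsequence principle is also sound, so I see no gap.
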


It is shown in \cite{AT} that the condition $p\ge3$ cannot be removed,
except when $p=2$. Moreover, one can see by easy examples with $p=4$ (see \cite{ST}) that, in general, the equality does not hold in (\ref{eq:BL}).

The elementary relation for Hilbert spaces, 
$$u_n\rightharpoonup u\;\Longrightarrow\; \|u_k\|^2=\|u\|^2+\|u_k-u\|^2+o(1),$$
has the following counterpart in $\mathrm{CAT}(0)$-spaces.

\begin{theorem}
\label{thm:newBLpL} Let $(E,d)$ be a space which satisfies inequality (\ref{cat}) and property (\ref{eq:d=2r}) for all $x,y\in E$.
Let $(x_n)_{n\in N}\subset E$, $x\in E$ such that 
$x_{n}\rightharpoondown x$.
Then, for all $y\in E$
\begin{equation}
\label{eq:BLPolar}
d^2(x_n,y)\geq d^2(x,y)+ d^2(x_n,x)+o(1).
\end{equation}
\end{theorem}

\begin{proof}
Let $y\in E$ and given a dyadic number $\alpha\in ]0,1[$ let $c_\alpha$ as in (\ref{eq:c_alpha}).
For $n$ large 
\begin{equation}
\label{eq:polarineq}
d(x_n,x)< d(x_n,c_\alpha).
\end{equation}
So, by taking $z=x_n$ in (\ref{catalpha})  we deduce 
$$
d^2(x_n,x)\leq d^2(x_n,y)-(1-\alpha) d^2(x,y).
$$
Since we can take $\alpha$ arbitrarily close to $0$, the assertion follows.
\end{proof}

From Theorem \ref{thm:newBLpL} one can easily deduce some classical properties of weak convergence known for normed spaces.
The first one is just another form of item (3) in Remark \ref{re:Jost}.

\begin{corollary}
\label{cor:BLpL1}
Let $(E,d)$ be a Hadamard space, then
for any $y\in E$ the map $x\in E\mapsto d(x,y)\in \R$ is a polarly lower semicontinuous function.
\end{corollary}

\begin{corollary}
\label{cor:BLpL2}
Hadamard spaces possess the Kadec-Klee property,
i.e.
if $(x_n)_{n\in\N}\subset E$ and $x\in E$ are such that $x_n\rightharpoondown x$ and if there exists $y\in E$ such that 
$d(x_n,y)\rightarrow d(x,y)$
then
$d(x_n,x)\rightarrow 0$.
\end{corollary}

\noindent{Acknowledgment.} The authors thank the reviewer of this note for providing a broader perspective to their results, including the crucial references to the work of Lim and Kirk \& Panayak cited below. One of the authors (C.T.) thanks the math faculties of the Politecnico and of the University of Bari for their warm hospitality.

\bibliographystyle{amsplain}
%    Insert the bibliography data here.

\end{document}